\documentclass[a4paper,11pt]{article}
\usepackage[a4paper, margin=1in]{geometry}
\usepackage[utf8]{inputenc}
\usepackage[T1]{fontenc}
\usepackage{graphicx}
\usepackage{amsmath, amssymb, amstext, mathtools}
\usepackage{amsthm}
\usepackage{pgfplots}
\usepackage{hyperref}

\usepackage{enumitem}
\setenumerate[1]{label=(\arabic*)}

\definecolor[named]{urlblue}{cmyk}{1,0.58,0,0.21}

\hypersetup{
 breaklinks=true,
 colorlinks=true,
 citecolor=purple!70!blue!60!black,
 linkcolor=purple!70!blue!90!black,
 urlcolor=urlblue,
 pdflang={en},
 pdftitle={Homomorphism-Distinguishing Closedness for Graphs of Bounded Tree-Width},
 pdfauthor={Daniel Neuen}
}

\newtheorem{theorem}{Theorem}[section]
\newtheorem{lemma}[theorem]{Lemma}

\newtheorem{conjecture}[theorem]{Conjecture}
\theoremstyle{definition}
\newtheorem{definition}[theorem]{Definition}
\theoremstyle{remark}

\newcommand{\CA}{{\mathcal A}}
\newcommand{\CB}{{\mathcal B}}

\newcommand{\CD}{{\mathcal D}}

\newcommand{\CF}{{\mathcal F}}

\newcommand{\CL}{{\mathcal L}}
\newcommand{\CM}{{\mathcal M}}

\newcommand{\CT}{{\mathcal T}}

\newcommand{\RR}{{\mathbb R}}

\DeclareMathOperator{\tw}{tw}
\DeclareMathOperator{\htw}{hdtw}

\DeclareMathOperator{\CFI}{CFI}
\DeclareMathOperator{\twCFI}{{CFI^{\textsf{x}}}}

\DeclareMathOperator{\CopRob}{CopRob}
\DeclareMathOperator{\BP}{BP}

\DeclareMathOperator{\sub}{sub}

\DeclareMathOperator{\spasm}{spasm}

\newcommand{\WL}[2]{\chi^{#1,#2}}
\newcommand{\WLit}[3]{\chi_{(#2)}^{#1,#3}}

\newcommand{\orcid}[1]{\href{https://orcid.org/#1}{\includegraphics[height=1.8ex]{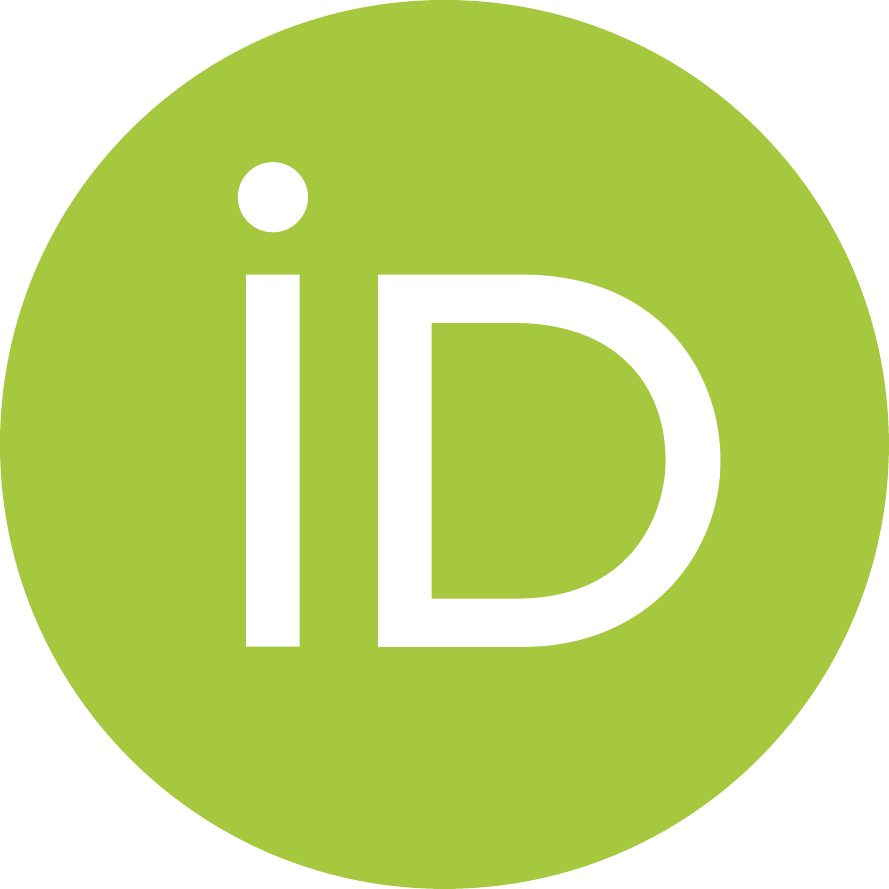}}}
\newcommand{\email}[1]{\href{mailto:#1}{\texttt{#1}}}

\title{Homomorphism-Distinguishing Closedness\\for Graphs of Bounded Tree-Width}
\author{
Daniel Neuen \orcid{0000-0002-4940-0318}\\
University of Bremen\\
\email{dneuen@uni-bremen.de}
}
\date{}

\begin{document}

\maketitle

\begin{abstract}
 Two graphs are \emph{homomorphism indistinguishable} over a graph class $\CF$, denoted by $G \equiv_\CF H$, if $\hom(F,G) = \hom(F,H)$ for all $F \in \CF$ where $\hom(F,G)$ denotes the number of homomorphisms from $F$ to $G$.
 A classical result of Lov\'{a}sz shows that isomorphism between graphs is equivalent to homomorphism indistinguishability over the class of all graphs.
 More recently, there has been a series of works giving natural algebraic and/or logical characterizations for homomorphism indistinguishability over certain restricted graph classes.

 A class of graphs $\CF$ is \emph{homomorphism-distinguishing closed} if, for every $F \notin \CF$, there are graphs $G$ and $H$ such that $G \equiv_\CF H$ and $\hom(F,G) \neq \hom(F,H)$.
 Roberson conjectured that every class closed under taking minors and disjoint unions is homomorphism-distinguishing closed which implies that every such class defines a distinct equivalence relation between graphs.
 In this work, we confirm this conjecture for the classes $\CT_k$, $k \geq 1$, containing all graphs of tree-width at most $k$.

 As an application of this result, we also characterize which subgraph counts are detected by the $k$-dimensional Weisfeiler-Leman algorithm.
 This answers an open question from [Arvind et al., J.\ Comput.\ Syst.\ Sci., 2020].
\end{abstract}

\section{Introduction}
\label{sec:intro}

In 1967, Lov\'{a}sz \cite{Lovasz67} proved that two graphs $G$ and $H$ are isomorphic if and only if $\hom(F,G) = \hom(F,H)$ for every graph $F$ where $\hom(F,G)$ denotes the number of homomorphisms from $F$ to $G$.
A natural follow-up question is to ask whether it is necessary to take the class of all graphs $F$ to obtain the above result, and which kind of other equivalence relations can be obtained  by restricting  $F$ to come from a proper subclass of all graphs.
For a graph class $\CF$, we say that two graphs $G$ and $H$ are \emph{$\CF$-equivalent}, denoted by $G \equiv_\CF H$, if $\hom(F,G) = \hom(F,H)$ for all $F \in \CF$.
Hence, Lov\'{a}sz's \cite{Lovasz67} result says that $\equiv_\CA$ is identical to the isomorphism relation where $\CA$ denotes the class of all graphs.

In recent years, there has been a series of works giving natural algebraic and/or logical characterizations for homomorphism indistinguishability over certain restricted classes of graphs.
For example, this includes graphs of bounded tree-width \cite{Dvorak10}, graphs of bounded path-width \cite{GroheRS22}, graphs of bounded tree-depth \cite{Grohe20,GroheRS22} and the class of planar graphs \cite{MancinskaR20}.
In particular, those results imply that the equivalence relations $\equiv_\CF$ obtained from the mentioned graph classes $\CF$ do not correspond to isomorphism, and moreover, these equivalence relations are pairwise distinct.

In \cite{Roberson22}, Roberson initiated a more systematic study of the question which types of graph classes $\CF$ lead to different equivalence relations $\equiv_\CF$.
A class of graphs $\CF$ is called \emph{homomorphism-distinguishing closed} if, for every $F \notin \CF$, there are graphs $G$ and $H$ such that $G \equiv_\CF H$ and $\hom(F,G) \neq \hom(F,H)$.

\begin{conjecture}[Roberson \cite{Roberson22}]
 \label{conj:hom-dist-closed}
 Let $\CF$ be a graph class closed under taking disjoint unions and minors. Then $\CF$ is homomorphism-distinguishing closed.
\end{conjecture}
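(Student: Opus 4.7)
The plan is to attempt the full conjecture---which remains open beyond specific classes such as $\CT_k$ (this paper), graphs of bounded path-width or tree-depth, and planar graphs---by a reduction to connected $F$ followed by a minor-witnessing construction. First, assume $F \notin \CF$ and decompose $F = F_1 \sqcup \cdots \sqcup F_s$ into connected components. Since $\CF$ is closed under disjoint unions, at least one $F_i \notin \CF$; and since $\hom(F,G) = \prod_j \hom(F_j,G)$, any pair $(G,H)$ distinguishing $F_i$ over $\CF$ also distinguishes $F$. Hence I may assume $F$ is connected.

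Second, I would exploit minor closure: there exists an excluded minor $M$ of $\CF$ with $F \succeq M$, and by Robertson-Seymour $M$ may be chosen from a finite set of forbidden minors. The strategy is to construct $(G,H)$ such that no graph avoiding $M$ as a minor can witness the difference, while $F$ can. A natural concrete attempt is a minor-aware Cai-F\"urer-Immerman construction: choose a host graph $J$ containing many disjoint minor models of $M$ and set $G = \CFI(J)$, $H = \twCFI(J)$ via untwisted and twisted gadgets. I would then need to establish (i) $\hom(F,G) \neq \hom(F,H)$, using that $F \succeq M$ can exploit the twist; and (ii) $\hom(F',G) = \hom(F',H)$ for every $F' \in \CF$, using that $F' \not\succeq M$ cannot.

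The main obstacle is precisely part (ii) in the general setting. For $\CT_k$ it follows because homomorphism indistinguishability over $\CT_k$ coincides with equivalence under the $k$-dimensional Weisfeiler-Leman algorithm (Dvo\v{r}\'ak), together with the classical fact that $k$-WL cannot separate CFI pairs built over bases of tree-width at most $k$. No such algebraic counterpart, nor any uniform CFI-style fooling theorem, is currently known for arbitrary minor-closed $\CF$: the Robertson-Seymour decomposition of $\CF$-minor-free graphs interleaves bounded genus, vortices, and apex vertices in ways that resist a clean inductive analysis. The hard part will therefore be to find either a CFI-type construction whose invisibility over $\CF$ is certified directly by this structural decomposition, or a fundamentally different approach---perhaps adapting Man\v{c}inska-Roberson's quantum-graph arguments from the planar case---that bypasses any WL-style invariant entirely.
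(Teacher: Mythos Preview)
The statement is Roberson's conjecture; the paper does \emph{not} prove it in general but only the special case $\CF = \CT_k$ (Theorem~\ref{thm:tw-homomorphism-distinguishing-closed}). There is therefore no proof in the paper to compare against, and your proposal---as you yourself say---is not a proof either: you correctly isolate step~(ii) as the open obstacle, and that is exactly where the sketch stops being an argument and becomes a research programme. For what it is worth, the paper's route to the $\CT_k$ case is not your direct ``given $F \notin \CF$, build $(G,H)$'' construction; it instead verifies Roberson's sufficient condition (Lemma~\ref{la:homomorphism-distinguishing-closed-condition}) by showing that a weak oddomorphism from $F$ to $G$ forces $\tw(G) \leq \tw(F)$, which is in turn proved by contradiction via the CFI pair and the bijective pebble game (Lemma~\ref{la:cfi-duplicator-win}).

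Two smaller issues in the parts you present as settled. First, the reduction to connected $F$ is not as immediate as written: from $\hom(F_i,G) \neq \hom(F_i,H)$ and $\hom(F,\cdot) = \prod_j \hom(F_j,\cdot)$ you cannot conclude $\hom(F,G) \neq \hom(F,H)$, because several components may lie outside $\CF$ (so their factors need not agree on $G$ and $H$), and even if only $F_i$ does, the remaining product may vanish. This is repairable, but it needs an actual argument. Second, step~(i) is not free either: by Theorem~\ref{thm:hom-count-cfi-oddomorphism} what separates $\CFI(J)$ from $\twCFI(J)$ via $F$ is the existence of a weak oddomorphism from $F$ to $J$, and merely having $F \succeq M$ together with $J$ containing disjoint minor models of $M$ does not by itself produce one.
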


In particular, this conjecture implies that every graph class closed under taking disjoint unions and minors defines a distinct equivalence relation $\equiv_\CF$.
Note that not every graph class is homomorphism-distinguishing closed.
For example, the class $\CD_2$ of $2$-degenerate graphs (which is not closed under taking minors) is not homomorphism-distinguishing closed since the corresponding equivalence relation defines the isomorphism relation between graphs \cite{Dvorak10}.

For $k \geq 1$ let $\CT_k$ denote the class of all graphs of tree-width at most $k$.
Roberson \cite{Roberson22} showed that $\CT_k$ is homomorphism-distinguishing closed for $k \in \{1,2\}$.
In this note, we generalize this to all $k \geq 1$.

\begin{theorem}
 \label{thm:tw-homomorphism-distinguishing-closed}
 The class $\CT_k$ is homomorphism-distinguishing closed for all $k \geq 1$.
\end{theorem}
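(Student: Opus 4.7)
The plan is to combine Dvo\v{r}\'ak's theorem with a Cai--F\"urer--Immerman (CFI) style construction. By \cite{Dvorak10}, $G \equiv_{\CT_k} H$ if and only if the $k$-dimensional Weisfeiler--Leman algorithm fails to distinguish $G$ from $H$. Given $F \notin \CT_k$ (equivalently $\tw(F) \geq k+1$), it therefore suffices to exhibit graphs $G$ and $H$ that are $k$-WL equivalent yet satisfy $\hom(F,G) \neq \hom(F,H)$.

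The candidate pair will be the standard and twisted CFI graphs $G = \CFI(X)$ and $H = \twCFI(X)$ over a base graph $X$ of tree-width at least $k+1$, chosen depending on $F$. The classical Cai--F\"urer--Immerman argument shows that whenever $\tw(X) \geq k+1$, these two graphs are $k$-WL equivalent: a winning strategy for the robber in the $k$-cops-and-robbers game on $X$ lifts to a winning strategy in the bijective pebble game characterizing $k$-WL on $(\CFI(X), \twCFI(X))$. Together with Dvo\v{r}\'ak's theorem, this yields $\CFI(X) \equiv_{\CT_k} \twCFI(X)$, so the only remaining task is to separate them via a hom-count from $F$.

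Every homomorphism $\varphi : F \to \CFI(X)$ projects, via the canonical gadget-collapse map $\CFI(X) \to X$, to a ``shadow'' homomorphism $\psi : F \to X$, and the analogous statement holds for $\twCFI(X)$. Partitioning the two hom-counts by shadow yields an identity of the shape
\[
 \hom(F, \CFI(X)) - \hom(F, \twCFI(X)) \;=\; \sum_{\psi \in \hom(F,X)} c_\psi,
\]
where the coefficient $c_\psi$ depends only on how the image $\psi(F)$ sits inside $X$ and is naturally controlled by the $\mathbb{F}_2$-cycle space of this image together with the location of the twist. A natural choice of base is $X = F$ (or a minor-minimal surjective quotient of $F$) with the identity-like shadow playing the role of a distinguished term; for this shadow the hypothesis $\tw(F) \geq k+1$ provides enough cycle space for the CFI twist to leave a nontrivial contribution $c_{\mathrm{id}} \neq 0$.

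The main obstacle, I expect, is controlling the above sum uniformly in $F$: one must argue that the contribution from the distinguished shadow is not cancelled by contributions from other shadows $\psi \in \hom(F, X)$. This will require a careful choice of $X$ (likely exploiting that $F$ itself has tree-width at least $k+1$ to pin down a canonical base) together with a tree-width-sensitive structural argument, in the spirit of the cops-and-robbers characterization, locating a concrete place in $F$ where the CFI twist leaves a net visible imprint on the homomorphism count rather than being washed out by averaging over shadows.
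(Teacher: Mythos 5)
Your overall architecture is the right one, and half of it is genuinely correct: taking a CFI pair over a base $X$ of tree-width at least $k+1$ and lifting the Robber's winning strategy in the cops-and-robbers game to a Duplicator strategy in the bijective pebble game is exactly how the paper establishes $\CFI(X) \equiv_{\CT_k} \twCFI(X)$ (Lemma \ref{la:cfi-duplicator-win} combined with Theorem \ref{thm:eq-hom-pebble}); your appeal to Dvo\v{r}\'ak/$k$-WL for this step is sound. The genuine gap is the separation step, which you yourself flag as the ``main obstacle'' and then do not resolve: you need $\hom(F,\CFI(X)) \neq \hom(F,\twCFI(X))$ for your chosen base $X$, and your shadow decomposition with coefficients $c_\psi$ stops exactly where the real work begins. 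The whole difficulty is that a priori the contributions of different shadows could cancel; nothing in your sketch rules this out, and the ``tree-width-sensitive structural argument'' you invoke is not supplied. This is precisely what Roberson's machinery provides and what the paper imports as a black box: Theorem \ref{thm:hom-count-cfi-oddomorphism} shows the difference is always nonnegative (so cancellation across shadows is impossible) and is strictly positive if and only if there is a weak oddomorphism from $F$ to the base; for your choice $X = F$ the identity map is such an oddomorphism, which is what makes the distinguished shadow's contribution survive. Without proving this monotonicity-plus-strictness statement (or citing it), your argument does not go through.

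Two further points you would still have to handle even granting the separation inequality. First, your base must be connected for the pebble-game argument (and for the twist to matter at all), so for disconnected $F$ you must take $X$ to be a component of tree-width $\geq k+1$ and then argue that the product formula $\hom(F,\cdot) = \prod_i \hom(F_i,\cdot)$ over components still yields different totals, which requires the other factors to be nonzero; this is not automatic. The paper sidesteps both issues by not constructing a pair per $F$ at all: it verifies the sufficient condition of Lemma \ref{la:homomorphism-distinguishing-closed-condition} (Roberson's Theorem 6.2), whose hypotheses already package the disjoint-union bookkeeping, and its only new task is to show, via Lemma \ref{la:oddomorphism-minors}, Theorem \ref{thm:hom-count-cfi-oddomorphism}, and the pebble-game lemma, that no graph of tree-width at most $k$ admits a weak oddomorphism to a graph of tree-width greater than $k$. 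So your proposal is the ``direct construction'' version of the same CFI idea, but as written it omits the counting theorem that makes the construction distinguish, and that theorem is the deepest ingredient.
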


For the proof, we rely on known characterizations of homomorphism indistinguishability over the class $\CT_k$ \cite{DellGR18,Dvorak10,GroheRS22} and existing constructions of non-isomorphic pairs of graphs that are difficult to distinguish (see, e.g., \cite{CaiFI92,DawarR07,Roberson22}).

\medskip

As an application of this result, we are able to characterize which subgraph counts are detected by the Weisfeiler-Leman algorithm (see also \cite{ArvindFKV20}).
The Weisfeiler-Leman algorithm (WL) is a standard heuristic in the context of graph isomorphism testing (see, e.g., \cite{CaiFI92}) which recently also gained attention in a machine learning context \cite{MorrisLMRKGFB21,MorrisRFHLRG19,ShervashidzeSLMB11,XuHLJ19}.
For $k \geq 1$, the $k$-dimensional Weisfeiler-Leman algorithm ($k$-WL) computes an isomorphism-invariant coloring of the $k$-tuples of vertices of a graph $G$.
If the color patterns computed for two graphs $G$ and $H$ do not match, the graphs are non-isomorphic.
In this case, we say that $k$-WL \emph{distinguishes} $G$ and $H$.
It is known that two graphs $G$ and $H$ are distinguished by $k$-WL if and only if $G \not\equiv_{\CT_k} H$, i.e., indistinguishability by $k$-WL can be characterized by homomorphism indistinguishability over the class of graphs of tree-width at most $k$ \cite{DellGR18,Dvorak10,GroheRS22}.

In \cite{Furer17}, F{\"{u}}rer initiated research on the question which subgraph counts are detected by $k$-WL.
Let $F$ and $G$ be two graphs. We write $\sub(F,G)$ to denote the number of subgraphs of $G$ isomorphic to $F$.
We say the function $\sub(F,\cdot)$ is \emph{$k$-WL invariant} if $\sub(F,G) = \sub(F,H)$ for all graphs $G,H$ that are indistinguishable by $k$-WL.
For example, F{\"{u}}rer \cite{Furer17} shows that $\sub(C_\ell,\cdot)$ is $2$-WL invariant for all $\ell \leq 6$ (where $C_\ell$ denotes the cycle on $\ell$ vertices), but $\sub(K_4,\cdot)$ is not $2$-WL invariant.
In \cite{ArvindFKV20}, Arvind, Fuhlbr{\"{u}}ck, K{\"{o}}bler and Verbitsky further extended this line of research by showing $\sub(F,\cdot)$ is $k$-WL invariant for all graphs $F$ that have hereditary tree-width at most $k$.
For a graph $F$ we define its \emph{hereditary tree-width}, denoted by $\htw(F)$, to be the maximum tree-width of a homomorphic image of $F$.
Arvind et al.\ \cite{ArvindFKV20} also provide some isolated negative results, but could not obtain a complete classification of which subgraph counts are detected by $k$-WL even for the special case $k=2$.

Building on Theorem \ref{thm:tw-homomorphism-distinguishing-closed}, we provide a complete classification which subgraph counts are detected by $k$-WL for all $k \geq 1$.
This answers an open question from \cite{ArvindFKV20}.

\begin{theorem}
 \label{thm:wl-subgraph-count}
 Let $F$ be a graph and $k \geq 1$.
 Then $\sub(F,\cdot)$ is $k$-WL invariant if and only if $\htw(F) \leq k$.
\end{theorem}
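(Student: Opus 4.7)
The plan is to prove both implications.

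The \emph{if}-direction reproduces the argument of Arvind et al.~\cite{ArvindFKV20}: starting from $\hom(F,G)=\sum_{\pi}\mathrm{inj}(F/\pi,G)$, with $\pi$ ranging over the partitions of $V(F)$ into independent sets, M\"obius inversion on the resulting poset $\Pi(F)$ produces
\[
|\mathrm{Aut}(F)|\cdot\sub(F,G)=\mathrm{inj}(F,G)=\sum_{F'\in\spasm(F)}\lambda_{F,F'}\hom(F',G);
\]
under $\htw(F)\le k$ every $F'\in\spasm(F)$ satisfies $\tw(F')\le k$, so the right-hand side is a linear combination of $k$-WL-invariant functions.

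For the \emph{only if} direction, assume $\htw(F)>k$ and fix $F^{*}\in\spasm(F)$ with $\tw(F^{*})>k$. The plan has three parts.

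\textbf{Step 1 (Non-vanishing of the M\"obius coefficients).} I plan to show $\lambda_{F,F'}\neq 0$ for every $F'\in\spasm(F)$. The key observation is that any refinement of a partition into independent sets is itself into independent sets, so the principal ideal below $\pi$ inside $\Pi(F)$ agrees with the interval $[\hat 0,\pi]$ in the full partition lattice of $V(F)$. Consequently
\[
\mu_{\Pi(F)}(\hat 0,\pi)=(-1)^{|V(F)|-|\pi|}\prod_{B\in\pi}(|B|-1)!,
\]
which is nonzero and whose sign depends only on $|\pi|$. Every $\pi$ with $F/\pi\cong F'$ has $|\pi|=|V(F')|$, so the contributions to $\lambda_{F,F'}$ share a common sign and cannot cancel. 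I expect this non-vanishing statement to be the main technical point of the proof; once the interval-coincidence observation is made it becomes a short computation.

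\textbf{Steps 2 and 3 (Invoke Theorem~\ref{thm:tw-homomorphism-distinguishing-closed} and amplify).} Since $F^{*}\notin\CT_k$, Theorem~\ref{thm:tw-homomorphism-distinguishing-closed} produces graphs $G_0,H_0$ with $G_0\equiv_{\CT_k}H_0$ and $\hom(F^{*},G_0)\neq\hom(F^{*},H_0)$. For an auxiliary graph $X$, set $G_X:=G_0\times X$ and $H_X:=H_0\times X$, where $\times$ denotes the categorical graph product. The multiplicativity $\hom(F',A\times X)=\hom(F',A)\hom(F',X)$ gives $G_X\equiv_{\CT_k}H_X$ for every $X$. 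Substituting into the expansion of $\sub(F,\cdot)$ and cancelling all terms with $F'\in\CT_k$ yields
\[
|\mathrm{Aut}(F)|\bigl(\sub(F,G_X)-\sub(F,H_X)\bigr)=\sum_{\substack{F'\in\spasm(F)\\ \tw(F')>k}}\lambda_{F,F'}\bigl(\hom(F',G_0)-\hom(F',H_0)\bigr)\hom(F',X).
\]
As a function of $X$ this is a linear combination of the $\hom(F',\cdot)$ for pairwise non-isomorphic graphs $F'$; its coefficient at $F^{*}$ equals $\lambda_{F,F^{*}}\bigl(\hom(F^{*},G_0)-\hom(F^{*},H_0)\bigr)\neq 0$ by Steps~1 and~2. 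Lov\'asz's linear independence of homomorphism count functions then supplies a graph $X$ realising a nonzero value, and the pair $(G_X,H_X)$ witnesses that $\sub(F,\cdot)$ is not $k$-WL-invariant.
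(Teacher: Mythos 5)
Your proof is correct, and while its skeleton matches the paper's (spasm expansion of $\sub(F,\cdot)$ plus Theorem \ref{thm:tw-homomorphism-distinguishing-closed} applied to some $F^*\in\spasm(F)$ with $\tw(F^*)>k$), you replace both of the paper's cited ingredients by self-contained arguments. The paper takes the non-vanishing of the coefficients in $\sub(F,G)=\sum_{L\in\spasm(F)}\alpha(L)\hom(L,G)$ as a black box from \cite{CurticapeanDM17}, whereas you re-derive it via the observation that partitions into independent sets form an order ideal of the partition lattice, so the M\"obius values have a uniform sign on each isomorphism class of quotients; and where the paper invokes Lemma \ref{la:hom-count-to-closure} (Seppelt's lemma) to pass from $\equiv_{\CT_k}$-invariance of the linear combination to $\spasm(F)\subseteq\CT_k$, you prove exactly the special case needed by the categorical-product amplification $G_X=G_0\times X$, $H_X=H_0\times X$ together with Lov\'asz's linear independence of the functions $\hom(F',\cdot)$ — which is essentially a direct proof of that lemma in this setting. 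Both routes use Theorem \ref{thm:tw-homomorphism-distinguishing-closed} in the same way; yours is more elementary and self-contained, the paper's is shorter and leans on a lemma stated for arbitrary homomorphism-distinguishing closed classes. Two small points: the final step should explicitly invoke Theorem \ref{thm:eq-hom-wl} to convert $G_X\equiv_{\CT_k}H_X$ into $G_X\simeq_k H_X$, since $k$-WL invariance is defined via $\simeq_k$ (this is how the paper bridges the gap as well), and the forward ("if") direction you sketch is simply Lemma \ref{la:wl-subgraph-count-backward}, which the paper cites from \cite{ArvindFKV20} rather than reproving.
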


Observe that the backward direction is already proved in \cite{ArvindFKV20}, i.e., the main contribution of this work is to show that for every graph $F$ with $\htw(F) > k$, the $k$-WL algorithm fails to detect subgraph counts from $F$.

For the proof, we use a well-known result \cite{CurticapeanDM17} that allows us to formulate subgraph counts as a linear combination of certain homomorphism counts, and then combine Theorem \ref{thm:tw-homomorphism-distinguishing-closed} with an auxiliary lemma from \cite{Seppelt23}.

\section{Preliminaries}
\label{sec:prelim}

A \emph{graph} is a pair $G=(V,E)$ with vertex set $V = V(G)$ and edge relation $E = E(G)$.
In this paper all graphs are finite, simple (no loops or multiple edges), and undirected.
We denote edges by $vw \in E(G)$ where $v,w \in V(G)$.
The \emph{neighborhood} of~$v\in V(G)$ is denoted by~$N_G(v)$.
Moreover, we write~$E_G(v)$ to denote the set of edges incident to $v$.
If the graph is clear from context, we usually omit the index $G$ and simply write $N(v)$ and $E(v)$.
For $A \subseteq V(G)$ we denote by $G[A]$ the \emph{induced subgraph} of $G$ on $A$.
Also, we denote by $G\setminus A$ the induced subgraph on the complement of $A$, that is $G \setminus A \coloneqq G[V(G) \setminus A]$.

An \emph{isomorphism} from a graph $G$ to another graph $H$ is a bijective mapping $\varphi\colon V(G) \rightarrow V(H)$ which preserves the edge relation, that is, $vw \in E(G)$ if and only if $\varphi(v)\varphi(w) \in E(H)$ for all~$v,w \in V(G)$.
Two graphs $G$ and $H$ are \emph{isomorphic} ($G \cong H$) if there is an isomorphism from~$G$ to~$H$.
We write $\varphi\colon G\cong H$ to denote that $\varphi$ is an isomorphism from $G$ to $H$.

Let $F$ and $G$ be two graphs.
A \emph{homomorphism} from $F$ to $G$ is a mapping $\varphi\colon V(F) \rightarrow V(G)$ such that $\varphi(v)\varphi(w) \in E(G)$ for all $vw \in E(F)$.
We write $\hom(F,G)$ to denote the number of homomorphisms from $F$ to $G$.

Let $G$ be a graph.
A graph $H$ is a \emph{minor} of $G$ if $H$ can be obtained from $G$ by deleting vertices and edges of $G$ as well as contracting edges of $G$.
More formally, let $\CB = \{B_1,\dots,B_h\}$ be a partition of $V(G)$ such that $G[B_i]$ is connected for all $i \in [h]$.
We define $G/\CB$ to be the graph with vertex set $V(G/\CB) \coloneqq \CB$ and
\[E(G/\CB) \coloneqq \{BB' \mid \exists v \in B, v' \in B' \colon vv' \in E(G)\}.\]
A graph $H$ is a minor of $G$ if there is a partition $\CB = \{B_1,\dots,B_h\}$ of connected subsets $B_i \subseteq V(G)$ such that $H$ is isomorphic to a subgraph of $G/\CB$.
A graph $G$ \emph{excludes $H$ as a minor} if $H$ is not a minor of $G$.

\section{Homomorphism Indistinguishability and Oddomorphisms}

Toward the proof of Theorem \ref{thm:tw-homomorphism-distinguishing-closed}, we need to cover several tools introduced in \cite{Roberson22}.

\begin{definition}[Roberson \cite{Roberson22}]
 Let $F$ and $G$ be graphs and suppose $\varphi$ is a homomorphism from $F$ to $G$.
 We say a vertex $a \in V(F)$ is \emph{odd (with respect to $\varphi$)} if $|N_F(a) \cap \varphi^{-1}(v)|$ if odd for every $v \in N_G(\varphi(a))$.
 Similarly, we say a vertex $a \in V(F)$ is \emph{even with respect to $\varphi$} if $|N_F(a) \cap \varphi^{-1}(v)|$ if even for every $v \in N_G(\varphi(a))$.
 
 An \emph{oddomorphism} from $F$ to $G$ is a homomorphism $\varphi$ from $F$ to $G$ such that
 \begin{enumerate}[label = (\Roman*)]
  \item every vertex $a \in V(F)$ is odd or even (with respect to $\varphi$), and
  \item $\varphi^{-1}(v)$ contains an odd number of odd vertices for every $v \in V(G)$.
 \end{enumerate}
 A \emph{weak oddomorphism} from $F$ to $G$ is a homomorphism $\varphi$ from $F$ to $G$ such that there is a subgraph $F'$ of $F$ for which $\varphi|_{V(F')}$ is an oddomorphism from $F'$ to $G$.
\end{definition}

Next, we introduce a construction for pairs of similar graphs from a base graph $G$ that has also been used in \cite{Roberson22}.
Actually, variants of this construction have already been used in several earlier works (see, e.g.,\ \cite{CaiFI92,DawarR07}).

Let $G$ be a graph and let $U \subseteq V(G)$.
For $v \in V(G)$ we define $\delta_{v,U} \coloneqq |\{v\} \cap U|$.
We define the graph $\CFI(G,U)$ (the name refers to the authors of \cite{CaiFI92} where this construction was first used in a related context) with vertex set
\[V(\CFI(G,U)) \coloneqq \{(v,S) \mid v \in V(G), S \subseteq E(v), |S| \equiv \delta_{v,U} \bmod 2\}\]
and edge set
\[V(\CFI(G,U)) \coloneqq \{(v,S)(u,T) \mid uv \in E(G), uv \notin S \bigtriangleup T\}\]
(here, $S \bigtriangleup T$ denotes the symmetric difference of $S$ and $T$, i.e., $S \bigtriangleup T \coloneqq (S \setminus T) \cup (T \setminus S)$).
For $v \in V(G)$ we also write $M_{G,U}(v) \coloneqq \{(v,S) \mid S \subseteq E(v), |S| \equiv \delta_{v,U} \bmod 2\}$ for the vertices in $\CFI(G,U)$ associated with $v$.

The following lemma is well-known (see, e.g., \cite{CaiFI92,Roberson22})

\begin{lemma}
 Let $G$ be a connected graph and let $U,U' \subseteq V(G)$.
 Then $\CFI(G,U) \cong \CFI(G,U')$ if and only if $|U| \equiv |U'| \bmod 2$.
\end{lemma}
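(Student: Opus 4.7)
The lemma splits into two directions, with the converse being the main challenge.

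For $|U| \equiv |U'| \pmod 2 \Rightarrow \CFI(G, U) \cong \CFI(G, U')$, my plan is to construct an explicit ``edge-flip'' isomorphism. For any $F \subseteq E(G)$, define the map $\phi_F(v, S) := (v, S \bigtriangleup (F \cap E(v)))$. A direct calculation shows that $\phi_F$ is a bijection from $V(\CFI(G, U))$ onto $V(\CFI(G, U \bigtriangleup \partial F))$, where $\partial F := \{v \in V(G) : |F \cap E(v)| \text{ is odd}\}$, and that it preserves adjacency (the key observation being that for each edge $uv \in E(G)$, the element $uv$ lies in both $F \cap E(u)$ and $F \cap E(v)$, so its membership in the symmetric difference of the $S$-coordinates is unchanged). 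To finish, I would invoke the standard fact that for a connected graph $G$ the $\mathbb{F}_2$-linear boundary operator $\partial$ surjects onto the even-cardinality subsets of $V(G)$. Thus whenever $|U \bigtriangleup U'|$ is even, we may pick $F$ with $\partial F = U \bigtriangleup U'$, and $\phi_F$ realizes the desired isomorphism.

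For the converse, I would first use the forward direction to reduce to showing $\CFI(G, \emptyset) \not\cong \CFI(G, \{v_0\})$ for any single $v_0 \in V(G)$, and then separate these by the isomorphism invariant ``contains an injective copy of $G$ as a subgraph.'' The canonical section $v \mapsto (v, \emptyset)$ trivially exhibits $G$ as a subgraph of $\CFI(G, \emptyset)$. To rule out any copy of $G$ in $\CFI(G, \{v_0\})$, I would write a candidate embedding as $\iota(v) = (f(v), S_v)$ with $f \colon G \to G$ a graph homomorphism, and derive an $\mathbb{F}_2$ contradiction: the edge-agreement condition $f(u)f(v) \in S_u \Leftrightarrow f(u)f(v) \in S_v$ forces a consistent edge-labeling $x_e \in \mathbb{F}_2$ on the edges of $f(G)$ satisfying $\sum_{e \in E(f(v))} x_e \equiv \delta_{f(v), \{v_0\}} \pmod 2$; summing over $v \in V(G)$ yields $0 \equiv |f^{-1}(v_0)| \pmod 2$, which contradicts the injectivity-forced constraints on $|f^{-1}(v_0)|$.

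The main obstacle will be controlling non-injective $f$: if $f$ collapses distinct vertices of $G$ into the same vertex of $f(G)$, the consistency of the edge-labels $x_e$ across multiple lifts of the same edge must be argued carefully, and $|f^{-1}(v_0)|$ is no longer a priori equal to $1$. If the subgraph-invariant route proves too fragile in this case, a clean backup — in the spirit of the CFI-style analysis of \cite{CaiFI92} — is to compare $\hom(G, \CFI(G, U))$ directly between $U = \emptyset$ and $U = \{v_0\}$: decomposing the sum over the underlying homomorphism $f$ and enumerating valid $(S_v)$-lifts using the cycle space of $G$ yields an explicit parity-sensitive formula. Small examples already suggest this works: for $G = K_3$ one has $\hom(K_3, \CFI(K_3, \emptyset)) = 12$ while $\hom(K_3, \CFI(K_3, \{v_0\})) = \hom(C_3, C_6) = 0$.
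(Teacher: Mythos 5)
Your forward direction is correct and complete: the edge-flip maps $\phi_F$ together with the standard fact that, for connected $G$, the $\mathbb{F}_2$-boundary operator maps $\mathbb{F}_2^{E(G)}$ onto the even-cardinality subsets of $V(G)$ give exactly the required isomorphisms. (The paper itself never proves this lemma -- it cites it as well known -- and only uses the special case $F = E(P)$ for a path $P$ in Lemma~\ref{la:twist-along-path}, so your version is a clean generalization of that step.) The problem is the converse, and specifically your primary route.

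The invariant ``$\CFI(G,\{v_0\})$ contains no injective copy of $G$'' is false in general, and the failure is exactly the non-injective projection $f$ that you flag as the main obstacle. Concretely, let $G$ have vertices $1,2,3,4$, a triangle on $1,2,3$ and a pendant edge $14$, and twist at $U = \{4\}$. Then $(1,\emptyset)$, $(2,\emptyset)$, $(3,\emptyset)$ and $(1,\{13,14\})$ are four distinct vertices of $\CFI(G,\{4\})$, and the map $1 \mapsto (2,\emptyset)$, $2 \mapsto (1,\emptyset)$, $3 \mapsto (3,\emptyset)$, $4 \mapsto (1,\{13,14\})$ is an injective homomorphism (in fact an induced embedding) of $G$ into $\CFI(G,\{4\})$: the pendant edge $14$ is routed over the edge $12$ of the base graph, where the twist is invisible, so there is no consistent edge-labeling of $f(G)$ to sum over and no parity contradiction. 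Hence subgraph containment of $G$ cannot separate $\CFI(G,\emptyset)$ from $\CFI(G,\{v_0\})$, and the primary route cannot be repaired. Your backup route is the right one and is essentially the standard argument (it is Theorem~\ref{thm:hom-count-cfi-oddomorphism} with $F = G$, the identity map being an oddomorphism): for each homomorphism $f\colon G \to G$ the admissible lifts $(S_v)_{v}$ are the solutions of an affine $\mathbb{F}_2$-linear system whose homogeneous part does not depend on $U$, so $f$ contributes either $0$ or $2^{d_f}$ to $\hom(G,\CFI(G,U))$; for $U = \emptyset$ the all-empty lift shows every $f$ contributes $2^{d_f}$, whereas for $f = \mathrm{id}$ and $|U|$ odd, summing the vertex-parity constraints over $V(G)$ counts every edge variable twice and gives $0 \equiv |U| \bmod 2$, so that term contributes $0$. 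This yields $\hom(G,\CFI(G,\emptyset)) > \hom(G,\CFI(G,\{v_0\}))$ and hence non-isomorphism. As submitted, however, this is only a sketch supported by the $K_3$ example, so the proof is incomplete until you carry out this counting argument (or invoke Theorem~\ref{thm:hom-count-cfi-oddomorphism} directly).
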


We define $\CFI(G) \coloneqq \CFI(G,\emptyset)$ and $\twCFI(G) \coloneqq \CFI(G,\{u\})$ for some $u \in V(G)$.

\begin{theorem}[Roberson {\cite[Theorem 3.13]{Roberson22}}]
 \label{thm:hom-count-cfi-oddomorphism}
 Let $F,G$ be graphs and suppose $G$ is connected.
 Then $\hom(F,\CFI(G)) \geq \hom(F,\twCFI(G))$.
 Moreover, $\hom(F,\CFI(G)) > \hom(F,\twCFI(G))$ if and only if there exists a weak oddomorphism from $F$ to $G$.
\end{theorem}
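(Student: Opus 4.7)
The plan is to decompose the count along the natural projection $\pi \colon \CFI(G,U) \to V(G)$, $(v,S) \mapsto v$. Every homomorphism $\psi \colon F \to \CFI(G,U)$ projects to a homomorphism $\varphi \coloneqq \pi \circ \psi$, so
\[
 \hom(F, \CFI(G,U)) \;=\; \sum_{\varphi \colon F \to G} L(\varphi, U),
\]
where $L(\varphi,U)$ counts the lifts of $\varphi$. A lift is specified by sets $S_a \subseteq E_G(\varphi(a))$ which I encode by indicator bits $y_{a,v} \in \mathbb{F}_2$ for $v \in N_G(\varphi(a))$. Unpacking the definition of $\CFI(G,U)$, lifts correspond bijectively to solutions of an $\mathbb{F}_2$-linear system $A_\varphi\, y = b_\varphi^U$ consisting of edge-compatibility equations $y_{a,\varphi(b)} + y_{b,\varphi(a)} = 0$ (one for each $ab \in E(F)$) together with parity equations $\sum_{v} y_{a,v} = \delta_{\varphi(a),U}$ (one for each $a \in V(F)$).

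Crucially, the matrix $A_\varphi$ does not depend on $U$; only the right-hand side does. For $U = \emptyset$ the system is homogeneous, so $L(\varphi,\emptyset) \geq 1$, and elementary linear algebra forces $L(\varphi,\{u_0\}) \in \{0, L(\varphi,\emptyset)\}$. Summing over $\varphi$ yields the inequality $\hom(F,\CFI(G)) \geq \hom(F,\twCFI(G))$, and reduces strictness to the existence of some $\varphi$ for which $b_\varphi^{\{u_0\}} = (\mathbf{1}[\varphi(a)=u_0])_a$ does not lie in the column span of $A_\varphi$. By duality, this is equivalent to the existence of a left-nullspace vector $c = (c_V, c_E)$ of $A_\varphi$ with $\sum_{a \in \varphi^{-1}(u_0)} c_V(a) = 1$.

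Interpreting $c$ combinatorially produces the weak oddomorphism. Transposing $A_\varphi$, the equation $c^\top A_\varphi = 0$ reads
\[
 c_V(a) \;\equiv\; \bigl|\{b \in N_F(a) \cap \varphi^{-1}(v) : c_E(ab) = 1\}\bigr| \pmod 2
\]
for every $a \in V(F)$ and every $v \in N_G(\varphi(a))$. Let $F'$ be the subgraph of $F$ with $V(F') = V(F)$ and $E(F') = \{ab : c_E(ab) = 1\}$. The displayed equation then says that each $a \in V(F)$ is odd (if $c_V(a) = 1$) or even (if $c_V(a) = 0$) with respect to $\varphi \colon F' \to G$. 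A double count along any edge $uv \in E(G)$ gives $\sum_{a \in \varphi^{-1}(u)} c_V(a) \equiv \sum_{b \in \varphi^{-1}(v)} c_V(b) \pmod 2$, so by connectedness of $G$ the fiber parity is constant on $V(G)$ and hence equals $1$ at every $v$; thus every fiber contains an odd number of odd vertices, and $\varphi$ is a weak oddomorphism witnessed by $F'$. The converse is a direct check: given a weak oddomorphism witnessed by $F'$, take $c_E$ to be the indicator of $E(F')$ and $c_V$ the indicator of the odd vertices; the odd/even dichotomy is exactly the left-nullspace equation (for $a \notin V(F')$ both sides vanish), and the odd-fiber condition at $u_0$ yields $\sum_{a \in \varphi^{-1}(u_0)} c_V(a) = 1$.

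I expect the main obstacle to be Step~2: recognising that the orthogonality between the left nullspace of $A_\varphi$ and $b_\varphi^{\{u_0\}}$ is precisely the condition of having an odd number of odd vertices in $\varphi^{-1}(u_0)$, and then propagating this single-fiber condition to every fiber by exploiting the hypothesis that $G$ is connected. Everything else is linear algebra and bookkeeping around the choice of $V(F')$.
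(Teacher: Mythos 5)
Your proposal is correct, but note that the paper itself offers no proof to compare against: Theorem~\ref{thm:hom-count-cfi-oddomorphism} is imported verbatim from Roberson (Theorem~3.13 of \cite{Roberson22}) and used as a black box. Judged on its own, your argument is sound and essentially reconstructs the parity/linear-algebra core of Roberson's original proof. The fibration of $\CFI(G,U)$ over $G$ does reduce counting lifts of a fixed $\varphi\colon F\to G$ to an $\mathbb{F}_2$-linear system whose coefficient matrix $A_\varphi$ is independent of $U$, so $L(\varphi,\emptyset)=|\ker A_\varphi|\geq 1$ while $L(\varphi,\{u_0\})\in\{0,|\ker A_\varphi|\}$, giving the inequality termwise; strictness is then equivalent to unsolvability of the twisted system for some $\varphi$, and your duality step correctly converts this into a left-kernel vector $c=(c_V,c_E)$ with $\sum_{a\in\varphi^{-1}(u_0)}c_V(a)=1$. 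The translation of $c$ into the spanning subgraph $F'$ with $E(F')=\{ab: c_E(ab)=1\}$ gives exactly the odd/even dichotomy of condition (I), and your double count of the $F'$-edges lying over each edge $uv\in E(G)$ shows the fiber parity $\sum_{a\in\varphi^{-1}(v)}c_V(a)$ is constant along edges, hence constant on all of $G$ by connectedness and equal to $1$ because of the pairing at $u_0$; this yields condition (II) at every vertex (and, as a byproduct, surjectivity onto $V(G)$, since an odd number of odd vertices is in particular at least one). The converse check, including the observation that both sides of the left-kernel equation vanish for $a\notin V(F')$, is also right. So the only difference from the paper is that you supply a proof where the paper cites one; your route is the natural one and, as far as I can tell, matches the spirit of Roberson's argument rather than constituting a genuinely new method.
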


We require two additional tools from \cite{Roberson22} stated below.

\begin{lemma}[{\cite[Lemma 5.6]{Roberson22}}]
 \label{la:oddomorphism-minors}
 Let $F$ and $G$ be graphs such that there is a weak oddomorphism from $F$ to $G$.
 Also suppose $G'$ is a minor of $G$.
 Then there is a minor $F'$ of $F$ such that there is an oddomorphism from $F'$ to $G'$.
\end{lemma}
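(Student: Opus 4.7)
The plan is to reduce to a single minor operation and argue each case separately. Since $\varphi \colon F \to G$ is only a weak oddomorphism, there is a subgraph $F_0 \subseteq F$ on which $\varphi$ restricts to an oddomorphism $F_0 \to G$; as every subgraph is a minor, I may replace $F$ with $F_0$ and assume $\varphi$ is itself an oddomorphism. Every minor $G'$ of $G$ is obtained from $G$ by a finite sequence of vertex deletions, edge deletions, and edge contractions, so by induction on the length of this sequence it suffices to prove: for each single operation $G \leadsto G^*$ of one of these three types, there is a minor $F^*$ of $F$ and an oddomorphism $\varphi^* \colon F^* \to G^*$.

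Vertex deletion and edge deletion are routine and rely only on disjointness of fibres. For $G^* = G \setminus v$ I set $F^* := F \setminus \varphi^{-1}(v)$ and $\varphi^* := \varphi|_{V(F^*)}$; since $N_{G^*}(\varphi(a)) = N_G(\varphi(a)) \setminus \{v\}$ and $\varphi^{-1}(u) \cap \varphi^{-1}(v) = \emptyset$ for $u \neq v$, the parity counts entering conditions (I) and (II) of the oddomorphism definition are unchanged, so $\varphi^*$ is an oddomorphism. For $G^* = G - vw$ I take $F^*$ to be $F$ with all edges between $\varphi^{-1}(v)$ and $\varphi^{-1}(w)$ removed; the only parity counts affected are $|N_F(a) \cap \varphi^{-1}(w)|$ for $a \in \varphi^{-1}(v)$ (and symmetrically), and these are no longer relevant in $G^*$ because $w \notin N_{G^*}(v)$.

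Edge contraction $G^* = G / vw$ is the substantive case and the main obstacle. Writing $A_v := \varphi^{-1}(v)$, $A_w := \varphi^{-1}(w)$, and letting $x$ denote the merged vertex, I must build the new fibre $(\varphi^*)^{-1}(x)$ from $A_v \cup A_w$ by further minor operations, mostly inside the bipartite subgraph $H := F[A_v, A_w]$. The source of difficulty is that both naive strategies fail: just deleting all of $H$'s edges and mapping $A_v \cup A_w$ to $x$ leaves $|A_v^{\text{odd}}| + |A_w^{\text{odd}}|$ (odd plus odd) odd vertices at $x$, which is even and violates (II); on the other hand, a single edge contraction $ab \in E(H)$ can violate (I) when an external vertex $a' \in \varphi^{-1}(u)$ with $u \in N_G(v) \cap N_G(w)$ is adjacent to both $a$ and $b$, because the merged count then differs in parity from the sum of the individual counts.

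The strategy for this case is therefore to combine both operations carefully: first delete those $H$-edges whose contraction would create the above parity spoilage at external neighbours, and then contract a suitable spanning subforest of the remaining bipartite graph so that each contracted component yields a single new vertex mapped to $x$. A handshake-lemma count in $H$, using that odd vertices of $A_v$ and $A_w$ have odd degree in $H$ while even vertices have even degree, shows that an odd number of connected components of $H$ carry an odd share of $|A_v^{\text{odd}}|$ (equivalently, of $|A_w^{\text{odd}}|$). Contracting each such component to a single vertex and discarding the remaining components by vertex deletion yields an odd number of merged vertices at $x$, satisfying (II), while the preliminary deletions have been chosen to secure (I). Verifying that these local modifications around $A_v \cup A_w$ do not disturb the oddomorphism data at any other vertex of $F$ is the final technical step; once this is done, $F^*$ is a minor of $F$ and $\varphi^* \colon F^* \to G^*$ is an oddomorphism, completing the induction.
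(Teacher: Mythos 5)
The paper does not actually prove this lemma; it imports it verbatim from Roberson, so your argument has to stand on its own, and in its present form it does not. The overall skeleton (restrict to the subgraph on which $\varphi$ is an oddomorphism, then handle one deletion or contraction at a time) is reasonable, but even the cases you call routine are stated too quickly. The parities are \emph{not} always unchanged under deletion: if removing the vertex $v$ (or the edge $vw$) leaves some $u \in N_G(v)$ isolated in $G^*$, then every vertex of $\varphi^{-1}(u)$ becomes vacuously odd, and condition (II) at $u$ now demands that the whole fibre have odd size, which need not hold. Concretely, take $G = K_2$ on $\{u,v\}$ and $F$ consisting of the edge $a_1b$ plus an isolated vertex $a_2$, with $\varphi(a_1)=\varphi(a_2)=u$ and $\varphi(b)=v$; this is an oddomorphism, but your recipe for $G^* = G \setminus v$ keeps both $a_1$ and $a_2$ over the now-isolated vertex $u$, so (II) fails. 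The lemma itself is of course still true here (take $F' = K_1$), and the defect is repairable by deleting surplus vertices over isolated images, but as written the claim ``the parity counts are unchanged'' is false.

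The substantive gap is the contraction case, which is the entire content of the lemma and which you only outline. Two verifications are missing, and the plan as described does not secure them. First, deleting $H$-edges cannot by itself ``secure (I) at external neighbours'': after contracting branch sets $C_1,\dots,C_m$ inside $A_v \cup A_w$ and discarding the remaining components, the count of an external vertex $a' \in \varphi^{-1}(u)$, $u \in N_{G^*}(x)$, toward the new fibre equals the number of \emph{distinct} $C_i$ meeting $N_F(a')$; this depends on the whole partition into branch sets and on which vertices of $A_v \cup A_w$ are deleted (your plan deletes entire components), and nothing in your argument controls these parities --- the spoilage is not attached to individual $H$-edges, so ``delete the bad edges first'' is not a well-defined remedy. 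Second, conditions (I) and (II) must hold at $x$ itself: each contracted vertex $c_i$ must be odd or even with respect to every $u \in N_{G^*}(x)$, and an odd number of the $c_i$ must be odd. Your handshake count only yields an odd number of components of $H$ containing an odd number of odd vertices, and such a component need not contract to an odd vertex: the count of $c_i$ toward $\varphi^{-1}(u)$ is the number of distinct $u$-fibre vertices adjacent to $C_i$, and common neighbours of several members of $C_i$ collapse, so this parity is not the sum of the members' parities. Choosing the deletions and branch sets so that (I) and (II) survive simultaneously on both the external and the contracted side is precisely the technical heart of Roberson's proof, and it is the part your proposal leaves open.
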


\begin{lemma}[{\cite[Theorem 6.2]{Roberson22}}]
 \label{la:homomorphism-distinguishing-closed-condition}
 Let $\CF$ be a class of graphs such that
 \begin{enumerate}
  \item \label{item:homomorphism-distinguishing-closed-condition-1} if $F \in \CF$ and there is a weak oddomorphism from $F$ to $G$, then $G \in \CF$, and
  \item \label{item:homomorphism-distinguishing-closed-condition-2} $\CF$ is closed under disjoint unions and restrictions to connected components.
 \end{enumerate}
 Then $\CF$ is homomorphism-distinguishing closed.
\end{lemma}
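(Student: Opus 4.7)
The plan is to exhibit, for each $F \notin \CF$, a Cai--F\"urer--Immerman pair built from a connected base graph $K \notin \CF$, and then apply Theorem \ref{thm:hom-count-cfi-oddomorphism} to distinguish $F$ while keeping all of $\CF$ indistinguishable. A padding disjoint summand will handle the case when $F$ itself is disconnected.

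\emph{Setup and construction.} First, observe that for any graph $F$, the identity map is an oddomorphism from $F$ to itself: for every $a \in V(F)$ and every $v \in N_F(a)$ one has $|N_F(a) \cap \{v\}| = 1$, so every vertex is odd, and each singleton fiber contains exactly one odd vertex. In particular, $F$ always admits a weak oddomorphism to itself. Given $F \notin \CF$, condition \ref{item:homomorphism-distinguishing-closed-condition-2} yields a connected component $F_1$ of $F$ with $F_1 \notin \CF$ (some component must lie outside $\CF$, else the disjoint union $F$ would lie in $\CF$). Set $K \coloneqq F_1$, which is connected, and pick any graph $L$ with $\hom(F_i, L) > 0$ for every connected component $F_i$ of $F$; for instance $L \coloneqq K_{|V(F)|}$ works, since each $F_i$ has chromatic number at most $|V(F)|$. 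The candidate pair is
\[
 G \coloneqq \CFI(K) \sqcup L, \qquad H \coloneqq \twCFI(K) \sqcup L.
\]

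\emph{Indistinguishability over $\CF$.} Let $F' \in \CF$. Closure under restrictions to connected components places each component $F'_j$ of $F'$ in $\CF$. The contrapositive of condition \ref{item:homomorphism-distinguishing-closed-condition-1}, applied with target $K \notin \CF$, rules out any weak oddomorphism $F'_j \to K$, and Theorem \ref{thm:hom-count-cfi-oddomorphism} gives $\hom(F'_j, \CFI(K)) = \hom(F'_j, \twCFI(K))$. Since $F'_j$ is connected, any homomorphism into a disjoint union lands in a single summand, so $\hom(F'_j, G) = \hom(F'_j, \CFI(K)) + \hom(F'_j, L)$ and likewise for $H$; multiplicativity of $\hom$ across components of $F'$ then yields $\hom(F', G) = \hom(F', H)$.

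\emph{Distinguishing $F$ and the main obstacle.} Writing $F = F_1 \sqcup \cdots \sqcup F_s$, multiplicativity gives
\[
 \hom(F, G) = \prod_{i=1}^{s}\bigl(\hom(F_i, \CFI(K)) + \hom(F_i, L)\bigr),
\]
and the analogous product for $\hom(F, H)$ with $\twCFI$ replacing $\CFI$. By Theorem \ref{thm:hom-count-cfi-oddomorphism}, each factor on the left is at least the corresponding factor on the right, and the $i=1$ factor is strictly larger because the identity is a weak oddomorphism $F_1 \to K = F_1$. The main subtlety lies in the disconnected case: without the padding $L$, some factor $\hom(F_i, \CFI(K))$ with $i \geq 2$ could equal zero, making both products vanish and destroying the strict inequality. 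Choosing $L$ with $\hom(F_i, L) > 0$ for all $i$ makes every factor positive, so the strict inequality propagates to the product and the proof is complete.
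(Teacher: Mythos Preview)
The paper does not give its own proof of this lemma; it is quoted verbatim as \cite[Theorem 6.2]{Roberson22}. Your argument is correct and is essentially the intended one: build a CFI pair over a connected base $K \notin \CF$, use condition~\ref{item:homomorphism-distinguishing-closed-condition-1} together with Theorem~\ref{thm:hom-count-cfi-oddomorphism} to get $\CF$-equivalence, and use the identity oddomorphism $F_1 \to F_1$ to separate $F$. The padding by $L$ to keep all factors positive is exactly the device Roberson uses to handle disconnected $F$, so your route matches the original proof in \cite{Roberson22}.
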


\section{Graphs of Bounded Tree-Width}

In this section, we present the proof of Theorem \ref{thm:tw-homomorphism-distinguishing-closed}.
We rely on game characterizations for graphs of bounded tree-width as well as homomorphism indistinguishability over graphs of tree-width at most $k$.

\subsection{Games}

First, we cover the cops-and-robber game that characterizes tree-width of graphs.
Fix some integer $k \geq 1$.
For a graph $G$, we define the cops-and-robber game $\CopRob_k(G)$ as follows:

\begin{itemize}
 \item The game has two players called \emph{Cops} and \emph{Robber}.
 \item The game proceeds in rounds, each of which is associated with a pair of positions
 $(\bar v,u)$ with~$\bar v \in \big(V(G)\big)^k$ and~$u \in V(G)$.
 \item To determine the initial position, the Cops first choose a tuple $\bar v = (v_1,\dots,v_k) \in \big(V(G)\big)^k$ and then the Robber chooses some vertex $u \in V(G) \setminus \{v_1,\dots,v_k\}$ (if no such $u$ exists, the Cops win the play).
  The initial position of the game is then set to $(\bar v,u)$.
 \item Each round consists of the following steps.
  Suppose the current position of the game is $(\bar v,u) = ((v_1,\dots,v_k),u)$.
  \begin{itemize}
   \item[(C)] The Cops choose some $i \in [k]$ and $v' \in V(G)$.
   \item[(R)] The Robber chooses a vertex $u' \in V(G)$ such that there exists a path from $u$ to $u'$ in $G \setminus \{v_1,\dots,v_{i-1},v_{i+1},\dots,v_k\}$.
    After that, the game moves to position $\big((v_1,\dots,v_{i-1},v',v_{i+1},\dots,v_k), u'\big)$.
  \end{itemize}

  If $u \in \{v_1,\dots,v_k\}$ the Cops win.
  If there is no position of the play such that the Cops win, then the Robber wins.
\end{itemize}

We say that the Cops (and the Robber, respectively) \emph{win $\CopRob_k(G)$} if the Cops (and the Robber, respectively) have a winning strategy for the game.
We also say that \emph{$k$ cops can catch a robber on $G$} if the Cops have a winning strategy in this game.

\begin{theorem}[\cite{SeymourT93}]
 \label{thm:cops-and-robbers-characterization-of-treewidth}
 A graph $G$ has tree-width at most $k$ if and only if $k+1$ cops can catch a robber on $G$.
\end{theorem}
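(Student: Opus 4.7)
The plan is to prove the two directions separately. For the forward implication, suppose $G$ has a tree decomposition $(T,(\beta_t)_{t\in V(T)})$ with every bag of size at most $k+1$. I would root $T$ at an arbitrary node $r$ and describe a Cop strategy maintaining the invariant that, after every round, the Cops occupy the vertices of some bag $\beta_t$ and the Robber's vertex lies in $\bigcup_{s \text{ descendant of } t}\beta_s \setminus \beta_t$. Initially, place the Cops on $\beta_r$; any Robber choice outside $\beta_r$ is separated from the rest of $G$ by $\beta_r$ and hence falls into the subtree below a unique child. In each subsequent round, the Cops identify the child $t'$ of the current bag $t$ whose subtree contains the Robber and move one by one so that the occupied vertex set becomes $\beta_{t'}$; each intermediate configuration still separates the Robber's component from the rest of $G$ because the vertices in $\beta_t \cap \beta_{t'}$ remain occupied throughout the transition. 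The Robber is caught after finitely many rounds because the chosen subtree strictly shrinks.

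For the converse, the task is to extract a tree decomposition of width at most $k$ from a winning Cop strategy. The natural route is via the \emph{game tree} of the strategy: its nodes are the game positions reachable when the Cops follow a fixed winning strategy and the Robber plays arbitrarily, and the bag at each node is the set of current Cop positions. One then verifies the three tree-decomposition axioms. Vertex coverage and edge coverage follow because the Cops eventually land on the Robber's vertex, and consequently every vertex the Robber might occupy and every edge incident to such a vertex appear in some bag. The interaction axiom, that the set of bags containing any fixed vertex $v$ induces a connected subtree, is the delicate step: it requires that the chosen winning strategy be \emph{monotone}, in the sense that once a cop leaves a vertex, no cop returns to it before the Robber is caught.

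The main obstacle is precisely this monotonicity issue: an arbitrary winning strategy need not be monotone, whereas the naive tree decomposition built from the game tree satisfies the connectivity axiom only if it is. I would handle this following Seymour and Thomas by proving separately that any winning strategy for $k+1$ Cops can be converted into a monotone one without increasing the number of cops, analysing a minimum-length winning strategy to derive a contradiction whenever a cop is re-placed on a previously abandoned vertex. A conceptually cleaner alternative is to reformulate the converse in terms of the dual notion of a \emph{bramble} or \emph{haven}: show that $k+1$ cops fail to win if and only if $G$ admits a haven of order at least $k+2$, and invoke the tree-width/bramble-number duality. Either way, the real content of the theorem lies in the converse via monotonicity or duality, while the forward direction is the direct Cop-strategy construction above.
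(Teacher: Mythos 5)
The paper does not prove this theorem; it is quoted as a known result of Seymour and Thomas \cite{SeymourT93}, so there is no in-paper argument against which to compare your sketch. On its own merits your outline is correct and identifies the right pressure point. The forward direction as you give it is standard and sound: root a width-$k$ tree decomposition, occupy the root bag with the $k+1$ cops, and repeatedly slide the cop configuration from a bag $\beta_t$ to the child bag $\beta_{t'}$ whose subtree contains the robber, keeping $\beta_t \cap \beta_{t'}$ occupied during the transition so that the robber's region never expands; termination follows since the relevant subtree strictly shrinks. For the converse you correctly isolate the single genuine difficulty, namely that reading a tree decomposition off the game tree of a winning Cop strategy requires the strategy to be \emph{monotone} (cops never reoccupy an abandoned vertex), which is not automatic for the game as defined. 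Both remedies you propose are the classical ones: either prove directly that any winning strategy for $k+1$ cops can be monotonized without extra cops, or route through havens/brambles and invoke the tree-width duality theorem. Seymour and Thomas in fact argue via havens and a min-max theorem, from which both the monotonicity statement and the tree-width bound fall out together, so your second alternative is closer to their actual proof. A complete write-up would of course have to carry out the monotonization or the haven argument in detail, which is where essentially all the work of the theorem lies, but your outline is a faithful description of the known proof structure.
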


Next, we discuss a game-theoretic characterization of two graphs being indistinguishable via homomorphism counts from graphs of tree-width at most $k$.

Let $k \geq 1$.
For graphs $G$ and $H$ on the same number of vertices, we define the \emph{bijective $k$-pebble game} $\BP_{k}(G,H)$ as follows:
\begin{itemize}
 \item The game has two players called \emph{Spoiler} and \emph{Duplicator}.
 \item The game proceeds in rounds, each of which is associated with a pair of positions
 $(\bar v,\bar w)$ with~$\bar v \in \big(V(G)\big)^k$ and~$\bar w \in \big(V(H)\big)^k$.
 \item To determine the initial position, Duplicator plays a bijection $f\colon \big(V(G)\big)^k \rightarrow \big(V(H)\big)^k$ and Spoiler chooses some $\bar v \in \big(V(G)\big)^k$.
  The initial position of the game is then set to $(\bar v,f(\bar v))$.
 \item Each round consists of the following steps.
  Suppose the current position of the game is $(\bar v,\bar w) = ((v_1,\ldots,v_k),(w_1,\ldots,w_k))$.
  \begin{itemize}
   \item[(S)] Spoiler chooses some $i \in [k]$.
   \item[(D)] Duplicator picks a bijection $f\colon V(G) \rightarrow V(H)$.
   \item[(S)] Spoiler chooses $v \in V(G)$ and sets $w \coloneqq f(v)$.
    Then the game moves to position $\big(\bar v[i/v], \bar v[i/v]\big)$ where $\bar v[i/v] \coloneqq (v_1,\dots,v_{i-1},v,v_{i+1},\dots,v_k)$ is the tuple obtained from $\bar v$ by replacing the $i$-th entry by $v$.
  \end{itemize}

  If mapping each $v_i$ to $w_i$ does not define an isomorphism of the induced subgraphs of $G$ and $H$, Spoiler wins the play.
  More precisely, Spoiler wins if there are~$i,j\in [k]$ such that~$v_i = v_j \nLeftrightarrow w_i =w_j$ or~$v_iv_j \in E(G) \nLeftrightarrow w_iw_j \in E(H)$.
  If there is no position of the play such that Spoiler wins, then Duplicator wins.
\end{itemize}

We say that Spoiler (and Duplicator, respectively) \emph{wins $\BP_k(G,H)$} if Spoiler (and Duplicator, respectively) has a winning strategy for the game.
Also, for a position $(\bar v,\bar w)$ with $\bar v \in \big(V(G)\big)^k$ and $\bar w \in \big(V(H)\big)^k$, we say that Spoiler (and Duplicator, respectively) \emph{wins $\BP_k(G,H)$ from position $(\bar v,\bar w)$} if Spoiler (and Duplicator, respectively) has a winning strategy for the game started at position $(\bar v,\bar w)$.

The following theorem follows from \cite{CaiFI92} and \cite{DellGR18,Dvorak10,GroheRS22}.

\begin{theorem}
 \label{thm:eq-hom-pebble}
 Suppose $k \geq 1$.
 Let $G$ and $H$ be two graphs.
 Then $\hom(F,G) = \hom(F,H)$ for every $F \in \CT_k$ if and only if Duplicator wins the game $\BP_{k+1}(G,H)$.
\end{theorem}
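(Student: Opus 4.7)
The plan is to factor the biconditional through the classical connection with the $k$-dimensional Weisfeiler-Leman algorithm. Specifically, I would establish the equivalence of three statements: (i) Duplicator wins $\BP_{k+1}(G,H)$; (ii) $k$-WL does not distinguish $G$ from $H$; and (iii) $\hom(F,G) = \hom(F,H)$ for every $F \in \CT_k$.

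For (i) $\Leftrightarrow$ (ii), I would perform a back-and-forth induction along the refinement rounds of $k$-WL. The key observation is that $k$-WL colors of $(k+1)$-tuples are computed by repeatedly counting, for each pebble position, how many one-step extensions land in each color class. A bijection Duplicator plays in a given round is exactly a witness that, for the pebble index chosen by Spoiler, the extension-color classes on the two sides have matching cardinalities; conversely, the equicardinalities recorded by $k$-WL let Duplicator construct the required bijection. This is the Cai-F\"urer-Immerman / Hella correspondence \cite{CaiFI92}.

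For (ii) $\Leftrightarrow$ (iii), I would invoke Dvo\v{r}\'{a}k's theorem \cite{Dvorak10}, in the strengthened form of \cite{DellGR18,GroheRS22}. The direction ($\Leftarrow$) is proved by induction along a tree decomposition of $F \in \CT_k$ of width at most $k$: at each bag of size $\leq k+1$, the number of partial homomorphism extensions into $G$ (respectively $H$) is expressible in terms of the stable $k$-WL colors of the corresponding $(k+1)$-tuples, so matching colors propagate bottom-up to matching counts. The direction ($\Rightarrow$) uses a M\"obius inversion on the lattice of graph quotients to recover $k$-WL color-class sizes from homomorphism counts of graphs drawn from $\CT_k$ (using that quotients in this framework stay within the class).

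The main obstacle is the ($\Rightarrow$) direction in the second step: turning the purely combinatorial fact that $k$-WL separates $G$ from $H$ into a concrete graph $F \in \CT_k$ with $\hom(F,G) \neq \hom(F,H)$. Since Theorem \ref{thm:eq-hom-pebble} is stated as a consolidation of results already present in the literature, in the final write-up one would simply cite \cite{CaiFI92,Dvorak10,DellGR18,GroheRS22} rather than reproduce this machinery in full.
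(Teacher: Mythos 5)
Your proposal matches the paper exactly: the paper gives no self-contained proof but states that the theorem ``follows from \cite{CaiFI92} and \cite{DellGR18,Dvorak10,GroheRS22}'', i.e.\ it is obtained by chaining the pebble-game/$k$-WL correspondence with the homomorphism-count/$k$-WL correspondence, which is precisely your factorization through statement (ii). Your sketches of the two equivalences are consistent with the cited results, so citing them in the final write-up, as you suggest, is the intended treatment.
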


\subsection{Indistinguishable Graphs}

The main step in the proof of Theorem \ref{thm:tw-homomorphism-distinguishing-closed} is to show that $\CFI(G)$ and $\twCFI(G)$ can not be distinguished via homomorphism counts from graphs of tree-width at most $k$ for all connected graphs $G$ of tree-width strictly greater than $k$.
The proof follows similar arguments from \cite{DawarR07} used to prove a closely related statement.
Toward this end, the next lemma provides certain useful isomorphisms between CFI-graphs.

\begin{lemma}
 \label{la:twist-along-path}
 Let $G$ be a connected graph and suppose $u,v \in V(G)$.
 Let $P$ be a path from $u$ to $v$.
 Then there is an isomorphism $\varphi\colon \CFI(G,\{u\}) \cong \CFI(G,\{v\})$ such that
 \begin{enumerate}
  \item \label{item:twist-along-path-1} $\varphi(M_{G,\{u\}}(w)) = M_{G,\{v\}}(w)$ for all $w \in V(G)$, and
  \item \label{item:twist-along-path-2} $\varphi(w,S) = (w,S)$ for all $w \in V(G) \setminus V(P)$ and $(w,S) \in M_{G,\{u\}}(w)$.
 \end{enumerate}
\end{lemma}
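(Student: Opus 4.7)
The plan is to write down an explicit isomorphism that performs a ``twist'' along the edges of $P$. For each vertex $w \in V(G)$, let $F_w \coloneqq E(P) \cap E_G(w)$ be the set of path-edges incident to $w$; so $|F_w| = 1$ if $w \in \{u,v\}$ (assuming $u \neq v$; the case $u = v$ is handled by the identity), $|F_w| = 2$ if $w$ is an interior vertex of $P$, and $|F_w| = 0$ otherwise. I would then define
\[
\varphi\colon \CFI(G,\{u\}) \to \CFI(G,\{v\}), \qquad (w,S) \mapsto (w, S \bigtriangleup F_w),
\]
and verify that this map is the desired isomorphism.

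Properties \eqref{item:twist-along-path-1} and \eqref{item:twist-along-path-2} drop out immediately from the definition: $\varphi$ fixes the first coordinate, and $F_w = \emptyset$ for $w \notin V(P)$, so $\varphi(w,S) = (w,S)$ on such vertices. To check that $\varphi$ is well-defined, I would compare $|S \bigtriangleup F_w| \equiv |S| + |F_w| \pmod 2$ to the parity condition in $\CFI(G,\{v\})$, noting that $|F_w| \bmod 2$ equals $\delta_{w,\{v\}} - \delta_{w,\{u\}} \bmod 2$ in each of the four cases distinguished above. The map $\varphi$ is manifestly bijective, since the same formula read in the opposite direction is an inverse.

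The heart of the argument is edge preservation. Given $(w_1,S)(w_2,T) \in E(\CFI(G,\{u\}))$, we have $w_1w_2 \in E(G)$ and $w_1w_2 \notin S \bigtriangleup T$, and we must show $w_1w_2 \notin (S \bigtriangleup F_{w_1}) \bigtriangleup (T \bigtriangleup F_{w_2}) = (S \bigtriangleup T) \bigtriangleup (F_{w_1} \bigtriangleup F_{w_2})$. The key observation is that $F_x$ consists only of path-edges incident to $x$, so the edge $w_1w_2$ lies in $F_{w_1}$ and in $F_{w_2}$ simultaneously if $w_1w_2 \in E(P)$, and in neither otherwise. Either way $w_1w_2 \notin F_{w_1} \bigtriangleup F_{w_2}$, and the claim follows. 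The reverse implication (mapping edges back) is the same computation applied to $\varphi^{-1}$.

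I don't expect any real obstacle here; the only creative step is guessing the right form for $\varphi$, namely toggling the indicator of $E(P)$ in each fiber along the path, after which the verification is routine parity bookkeeping. In fact the construction is essentially forced by property \eqref{item:twist-along-path-2}, which requires the twist to be supported on $V(P)$, combined with the need to shift the parity defect from $u$ to $v$ without disturbing any other vertex-class.
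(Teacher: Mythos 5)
Your proposed isomorphism $\varphi(w,S) = (w, S \bigtriangleup (E(P) \cap E_G(w)))$ is exactly the map used in the paper, and the parity and edge-preservation checks you outline are the same routine verifications the paper leaves to the reader. Correct, and essentially identical in approach.
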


\begin{proof}
 Let $E(P)$ denote the set of edges on the path $P$.
 Clearly,
 \begin{itemize}
  \item $|E(P) \cap E(u)| = 1$ and $|E(P) \cap E(v)| = 1$,
  \item $|E(P) \cap E(w)| = 2$ for all $w \in V(P) \setminus \{u,v\}$, and
  \item $|E(P) \cap E(w)| = 0$ for all $w \in V(G) \setminus V(P)$.
 \end{itemize}
 We define $\varphi(w,S) \coloneqq (w,S \bigtriangleup (E(P) \cap E(w)))$ for all $(w,S) \in \CFI(G,\{u\})$.
 It is easy to check that $\varphi\colon \CFI(G,\{u\}) \cong \CFI(G,\{v\})$ and the desired properties are satisfied.
\end{proof}

The next lemma forms the key technical step in the proof of Theorem \ref{thm:tw-homomorphism-distinguishing-closed}.

\begin{lemma}
 \label{la:cfi-duplicator-win}
 Let $G$ be a connected graph of tree-width $\tw(G) \geq k$.
 Then Duplicator wins the $k$-bijective pebble game played on $\CFI(G)$ and $\twCFI(G)$.
\end{lemma}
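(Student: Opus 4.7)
The plan is to convert a Robber winning strategy in $\CopRob_k(G)$---which exists by Theorem~\ref{thm:cops-and-robbers-characterization-of-treewidth} since $\tw(G) \geq k$ is equivalent to $k$ cops being unable to catch a robber on $G$---into a Duplicator winning strategy in $\BP_k(\CFI(G), \twCFI(G))$. Fix such a Robber strategy $\rho$ and $u_0 \in V(G)$ with $\twCFI(G) = \CFI(G, \{u_0\})$. Throughout the pebble game, Duplicator interprets the first coordinates $\bar v = (v_1,\dots,v_k)$ of the pebbles as cop positions in a mentally played parallel game of $\CopRob_k(G)$ in which the Robber follows $\rho$; write $r \in V(G) \setminus \bar v$ for the Robber's current position.

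The invariant I aim to maintain is the existence of a $\{u_0,r\}$-T-join $F \subseteq E(G)$---an edge subset with $|F \cap E(v)|$ odd iff $v \in \{u_0,r\}$---such that every pebble pair $(v_i,S_i),(v_i,T_i)$ satisfies $S_i \bigtriangleup T_i = E(v_i) \cap F$. This invariant immediately implies the pebble-induced map is a partial isomorphism: for any edge $e = v_iv_j \in E(G)$ between two cop positions, $e \in E(v_i) \cap F$ iff $e \in F$ iff $e \in E(v_j) \cap F$, so $e \notin (S_i \bigtriangleup T_i) \bigtriangleup (S_j \bigtriangleup T_j)$, whence $e \in S_i \bigtriangleup S_j$ iff $e \in T_i \bigtriangleup T_j$; pebble equality whenever $v_i = v_j$ is equally immediate.

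Duplicator's bijection is built from $\rho$ as follows. When Spoiler picks index $i$, for every potential response $v' \in V(G)$ Duplicator uses $\rho$ to obtain the Robber's reply $r_{v'}$ and chooses an $\{r,r_{v'}\}$-T-join $F^{\mathrm{det}}_{v'}$ whose support lies in $G \setminus (\bar v \setminus \{v_i\})$; such a T-join exists because $\rho$'s feasibility requirement guarantees $r$ and $r_{v'}$ lie in a common component of that subgraph, and the edge set of any path between them is a valid T-join. Duplicator then plays $f(v,S) := (v, S \bigtriangleup (E(v) \cap F_v))$, where $F_v := F \bigtriangleup F^{\mathrm{det}}_v$ is the updated $\{u_0,r_v\}$-T-join. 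A quick parity check---using that $|E(v) \cap F_v|$ is odd iff $v \in \{u_0,r_v\}$ together with $\rho$'s guarantee $r_v \neq v$---confirms $f$ is a valid bijection $V(\CFI(G)) \to V(\twCFI(G))$.

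The key point is that the invariant is then preserved automatically: because $F^{\mathrm{det}}_{v'}$ has support disjoint from $\bar v \setminus \{v_i\}$, we have $E(v_j) \cap F_{v'} = E(v_j) \cap F$ for every $j \neq i$, so the unchanged pebbles remain consistent with the new T-join. The initial round is handled analogously by setting $F$ to be any $\{u_0,r\}$-T-join, e.g.\ the edges of a path in the connected graph $G$. The main subtlety I anticipate is really only the bookkeeping required to translate between paths in $\CopRob_k(G)$ (on which $\rho$'s feasibility is phrased) and T-joins in $E(G)$ (on which the invariant is phrased); once this correspondence is recorded, the rest is a routine induction on the number of rounds, yielding that Duplicator wins $\BP_k(\CFI(G), \twCFI(G))$.
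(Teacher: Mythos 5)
Your proposal is correct and takes essentially the same approach as the paper: both convert the Robber's winning strategy in $\CopRob_k(G)$ into a Duplicator strategy by keeping the pebbles in the two CFI graphs related by a twist along an edge set with odd degrees exactly at $u_0$ and the Robber's current vertex, updating this twist along the path the Robber uses to escape. Your $T$-join bookkeeping is just an explicit rendering of the paper's composed path-twist isomorphisms from Lemma~\ref{la:twist-along-path}, so the two arguments coincide in substance.
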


\begin{proof}
 Let us fix some vertex $u_0 \in V(G)$ so that $\twCFI(G) = \CFI(G,\{u_0\})$.
 Since $\tw(G) \geq k$, the Robber has a winning strategy in the cops-and-robber game $\CopRob_k(G)$ by Theorem \ref{thm:cops-and-robbers-characterization-of-treewidth}.
 We translate the winning strategy for the Robber in $\CopRob_k(G)$ into a winning strategy for Duplicator in the $k$-bijective pebble game played on $\CFI(G)$ and $\twCFI(G)$.

 We first construct the bijection $f$ for the initialization round.
 Suppose $\bar x = (x_1,\dots,x_k) \in (V(\CFI(G)))^k$.
 We define $A(\bar x) \coloneqq (v_1,\dots,v_k)$ where $v_i \in V(G)$ is the unique vertex such that $x_i \in M_{G,\emptyset}(v_i)$.

 Now let $u$ be the vertex chosen by the Robber if the Cops initially place themselves on $A(\bar x)$.
 Let $P$ be a shortest path from $u$ to $u_0$ (recall that $G$ is connected), and let $\varphi$ denote the isomorphism from $\CFI(G,\{u\})$ to $\CFI(G,\{u_0\})$ constructed in Lemma \ref{la:twist-along-path}.
 We set $f(\bar x) \coloneqq (\varphi(x_1),\dots,\varphi(x_k))$.
 It is easy to see that this gives a bijection $f$ (we use the same isomorphism $\varphi$ for all tuples $\bar x$ having the same associated tuple $A(\bar x)$).

 Now, throughout the game, Duplicator maintains the following invariant.
 Let $(\bar x,\bar y)$ denote the current position.
 Then there is a vertex $u \in V(G)$ and a bijection $\varphi\colon \CFI(G,\{u\}) \cong \CFI(G,\{u_0\})$ such that
 \begin{itemize}
  \item $\varphi(M_{G,\{u\}}(w)) = M_{G,\{u_0\}}(w)$ for all $w \in V(G)$,
  \item $\varphi(\bar x) = \bar y$,
  \item $u$ does not appear in the tuple $A(\bar x)$, and
  \item the Robber wins from the position $(A(\bar x), u)$, i.e., if the Cops are placed on $A(\bar x)$ and the Robber is on $u$.
 \end{itemize}
 Note that this condition is satisfied by construction after the initialization round.

 Also observe that Duplicator never looses the game in such a position.
 Indeed, the mapping $\varphi$ restricts to an isomorphism between $\CFI(G,\emptyset) - M_{G,\emptyset}(u) = \CFI(G,\{u\}) - M_{G,\{u\}}(u)$ and $\CFI(G,\{u_0\}) - M_{G,\{u_0\}}(u)$.
 Hence, since no vertex associated with $u$ is pebbled in either graph, the pair $(\bar x,\bar y)$ induces a local isomorphism.

 So it remains to show that Duplicator can maintain the above invariant in each round of the $k$-bijective pebble game.
 Suppose $(\bar x,\bar y)$ is the current position.
 Also let $(A(\bar x), u)$ be the associated position in the cops-and-robber game.
 Suppose that $A(\bar x) = (v_1,\dots,v_k)$.

 Let $i \in [k]$ denote the index chosen by Spoiler.
 We describe the bijection $f$ chosen by Duplicator.
 Let $v \in V(G)$.
 Let $u'$ be the vertex the Robber moves to if the Cops choose $i$ and $v$ (i.e., the $i$-th cop changes its position to $v$) in the position $(A(\bar x),u)$.
 Let $P$ denote a path from $u$ to $u'$ that avoids $\{v_1,\dots,v_k\} \setminus \{v_i\}$.
 Let $\psi$ denote the isomorphism from $\CFI(G,\{u'\})$ to $\CFI(G,\{u\})$ constructed in Lemma \ref{la:twist-along-path}.
 We set $f(x) \coloneqq \varphi(\psi(x))$ for all $x \in M_{G,\emptyset}(v)$.

 It is easy to see that $f$ is a bijection.
 Let $x$ denote the vertex chosen by Spoiler and let $y \coloneqq f(x)$.
 Let $\bar x' \coloneqq \bar x[i/x]$ and $\bar y' \coloneqq \bar y[i/y]$, i.e., the pair $(\bar x',\bar y')$ is the new position of the game.
 Also, we set $\varphi' \coloneqq \psi \circ \varphi$ where $\psi$ denotes the isomorphism from $\CFI(G,\{u'\})$ to $\CFI(G,\{u\})$ used in the definition of $f(x)$.

 Clearly, $\varphi'(M_{G,\{u'\}}(w)) = M_{G,\{u_0\}}(w)$ for all $w \in V(G)$, since the corresponding conditions are satisfied for the mappings $\psi$ and $\varphi$.
 We have $\varphi'(x) = y$ by definition.
 All the other entries of $\bar x'$ are fixed by the mapping $\psi$ (see Lemma \ref{la:twist-along-path}\ref{item:twist-along-path-2}) which overall implies that $\varphi'(\bar x') = \bar y'$.
 Also, $u'$ does not appear in the tuple $A(\bar x')$ by construction, and the Robber wins from the position $(A(\bar x'), u')$.

 So overall, this means that Duplicator can maintain the above invariant which provides the desired winning strategy.
\end{proof}

With this, we are now ready to prove Theorem \ref{thm:tw-homomorphism-distinguishing-closed}.

\begin{proof}[Proof of Theorem \ref{thm:tw-homomorphism-distinguishing-closed}]
 Let $k \geq 1$ be fixed.
 Clearly, the class $\CT_k$ satisfies Condition \ref{item:homomorphism-distinguishing-closed-condition-2} from Lemma \ref{la:homomorphism-distinguishing-closed-condition}.

 Hence, consider some $F \in \CT_k$ and suppose there is a weak oddomorphism from $F$ to $G$.
 Suppose towards a contradiction that $G \notin \CT_k$, i.e., $\tw(G) > k$.
 Then there is a connected subgraph $G'$ of $G$ such that $\tw(G') > k$.
 By Lemma \ref{la:oddomorphism-minors}, we conclude that there is a graph $F' \in \CT_k$ such that there is an oddomorphism from $F'$ to $G'$.
 By Theorem \ref{thm:hom-count-cfi-oddomorphism}, we conclude that $\hom(F',\CFI(G')) > \hom(F',\twCFI(G'))$.
 Using Theorem \ref{thm:eq-hom-pebble} it follows that Spoiler wins the $(k+1)$-bijective pebble game $\BP_{k+1}(\CFI(G'),\twCFI(G'))$.
 But this contradicts Lemma \ref{la:cfi-duplicator-win} since $\tw(G') \geq k+1$.

 So $G \in \CT_k$ which means that the class $\CT_k$ also satisfies Condition \ref{item:homomorphism-distinguishing-closed-condition-1} from Lemma \ref{la:homomorphism-distinguishing-closed-condition}.
 Hence, $\CT_k$ is homomorphism-distinguishing closed by Lemma \ref{la:homomorphism-distinguishing-closed-condition}
\end{proof}

\section{Weisfeiler-Leman and Subgraph Counts}

In this section, we prove Theorem \ref{thm:wl-subgraph-count}.
Towards this end, we first need to formally introduce the WL algorithm.

\subsection{The Weisfeiler-Leman Algorihtm}

Let $\chi_1,\chi_2\colon V^k \rightarrow C$ be colorings of the $k$-tuples of vertices, where $C$ is some finite set of colors.
We say $\chi_1$ \emph{refines} $\chi_2$, denoted $\chi_1 \preceq \chi_2$, if $\chi_1(\bar v) = \chi_1(\bar w)$ implies $\chi_2(\bar v) = \chi_2(\bar w)$ for all $\bar v,\bar w \in V^{k}$.
The colorings $\chi_1$ and $\chi_2$ are \emph{equivalent}, denoted $\chi_1 \equiv \chi_2$,  if $\chi_1 \preceq \chi_2$ and $\chi_2 \preceq \chi_1$.

We describe the \emph{$k$-dimensional Weisfeiler-Leman algorithm} ($k$-WL) for all $k \geq 1$.
For an input graph $G$ let $\WLit{k}{0}{G}\colon (V(G))^{k} \rightarrow C$ be the coloring where each tuple is colored with the isomorphism type of its underlying ordered subgraph.
More precisely, $\WLit{k}{0}{G}(v_1,\dots,v_k) = \WLit{k}{0}{G}(v_1',\dots,v_k')$ if and only if, for all $i,j \in [k]$, it holds that
$v_i = v_j \Leftrightarrow v_i'= v_j'$ and $v_iv_j \in E(G) \Leftrightarrow v_i'v_j' \in E(G)$.

We then recursively define the coloring $\WLit{k}{i+1}{G}$ obtained after $i+1$ rounds of the algorithm (for $i \geq 0$).
For $k \geq 2 $ and $\bar v = (v_1,\dots,v_k) \in (V(G))^k$ we define
\[\WLit{k}{i+1}{G}(\bar v) \coloneqq \Big(\WLit{k}{i}{G}(\bar v), \CM_i(\bar v)\Big)\]
where
\[\CM_i(\bar v) \coloneqq \Big\{\!\!\Big\{\big(\WLit{k}{i}{G}(\bar v[w/1]),\dots,\WLit{k}{i}{G}(\bar v[w/k])\big) \;\Big\vert\; w \in V(G) \Big\}\!\!\Big\}\]
and $\bar v[w/i] \coloneqq (v_1,\dots,v_{i-1},w,v_{i+1},\dots,v_k)$ is the tuple obtained from $\bar v$ by replacing the $i$-th entry by $w$.
For $k = 1$, the definition is similar, but we only iterate over neighbors of $v_1$, i.e.,
\[\CM_i(v_1) \coloneqq \Big\{\!\!\Big\{\WLit{k}{i}{G}(w) \;\Big\vert\; w \in N_G(v_1) \Big\}\!\!\Big\}.\]
There is a minimal $i_\infty \geq 0$ such that $\WLit{k}{i_{\infty}}{G} \equiv \WLit{k}{i_{\infty}+1}{G}$ and for this $i_\infty$ we define $\WL{k}{G} \coloneqq \WLit{k}{i_\infty}{G}$.

Let $G$ and $H$ be two graphs.
We say that $k$-WL \emph{distinguishes} $G$ and $H$ if there exists a color $c$ such that
\[\Big|\Big\{\bar v \in \big(V(G)\big)^{k} \;\Big\vert\; \WL{k}{G}(\bar v) = c \Big\}\Big| \neq \Big|\Big\{\bar w \in \big(V(H)\big)^{k} \;\Big\vert\; \WL{k}{H}(\bar w) = c\Big\}\Big|.\]
We write $G \simeq_k H$ if $k$-WL does not distinguish $G$ and $H$.

Recall that $\CT_k$ denotes the class of graphs of tree-width at most $k$.
The following characterization follows from \cite{DellGR18,Dvorak10,GroheRS22} (see also Theorem \ref{thm:eq-hom-pebble}).

\begin{theorem}
 \label{thm:eq-hom-wl}
 Suppose $k \geq 1$.
 Let $G$ and $H$ be two graphs.
 Then $G \simeq_k H$ if and only if $G \equiv_{\CT_k} H$.
\end{theorem}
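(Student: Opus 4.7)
The plan is to derive Theorem \ref{thm:eq-hom-wl} from Theorem \ref{thm:eq-hom-pebble}. Since the latter already states $G \equiv_{\CT_k} H$ if and only if Duplicator wins $\BP_{k+1}(G,H)$, it suffices to establish the folklore correspondence
\[
 G \simeq_k H \iff \text{Duplicator wins } \BP_{k+1}(G,H).
\]
The conceptual content is that the $k$-WL refinement rule, which inspects a fresh vertex $w$ via the tuple $(\WLit{k}{i}{G}(\bar v[w/1]),\dots,\WLit{k}{i}{G}(\bar v[w/k]))$, captures precisely the information that Spoiler probes when moving one of the $k+1$ pebbles in $\BP_{k+1}$.

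For the direction ``$G \simeq_k H \Rightarrow$ Duplicator wins'', I would describe Duplicator's strategy explicitly. Denote by $\bar v^{-j}$ the $k$-tuple obtained from a $(k{+}1)$-tuple $\bar v$ by deleting coordinate $j$. Duplicator maintains the invariant that, at every position $(\bar v,\bar w)$ of $\BP_{k+1}$ and for each $j \in [k{+}1]$, one has $\WL{k}{G}(\bar v^{-j}) = \WL{k}{H}(\bar w^{-j})$, and the atomic relation of the $j$-th entry to the rest matches on both sides. When Spoiler picks index $i$, stability of the coloring $\WL{k}{G}$ on $\bar v^{-i}$, combined with the equality $\WL{k}{G}(\bar v^{-i}) = \WL{k}{H}(\bar w^{-i})$, yields a bijection $f\colon V(G) \to V(H)$ such that substituting any $v' \in V(G)$ into position $i$ preserves the invariant. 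Since matching $k$-WL colors in particular imply equal atomic types, Duplicator never loses. The initialization reduces to producing a bijection on $(k{+}1)$-tuples whose coordinate-deletions preserve $k$-WL colors, which exists since $G \simeq_k H$ implies equal multiplicities of the relevant colorings on $k$-subtuples.

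For the converse ``Duplicator wins $\Rightarrow G \simeq_k H$'', I would prove the contrapositive by induction on the smallest round $i$ at which the multisets $\{\!\!\{\WLit{k}{i}{G}(\bar v) : \bar v \in V(G)^k\}\!\!\}$ and $\{\!\!\{\WLit{k}{i}{H}(\bar w) : \bar w \in V(H)^k\}\!\!\}$ differ. For $i = 0$ this is an atomic-type mismatch on $k$-tuples which, once extended by an arbitrary last coordinate, becomes a mismatch on $(k{+}1)$-tuples that Spoiler exposes during the initialization of $\BP_{k+1}$. For $i \geq 1$, an inequality between $\CM_{i-1}(\bar v)$ and $\CM_{i-1}(\bar w)$ (for some pair with equal round-$(i{-}1)$ colors) forces, against every bijection $f\colon V(G) \to V(H)$ Duplicator plays, some $w \in V(G)$ for which the substituted tuples yield a mismatched round-$(i{-}1)$ color at a specific coordinate $j$; Spoiler moves the pebble at position $j$ to $w$ and invokes the inductive hypothesis.

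The principal obstacle is the bookkeeping between $k$-tuples (on which $k$-WL operates) and $(k{+}1)$-tuples (the positions of $\BP_{k+1}$), specifically in handling the multiplicities needed to transfer the global statement of Theorem \ref{thm:eq-hom-wl} to the initial bijection of $\BP_{k+1}$. Once this matching is in place, pairing each WL refinement step with a single pebble move makes both directions follow by standard inductions.
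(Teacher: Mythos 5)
The paper itself gives no proof of Theorem \ref{thm:eq-hom-wl}: it is quoted from \cite{DellGR18,Dvorak10,GroheRS22}, just as Theorem \ref{thm:eq-hom-pebble} is quoted from \cite{CaiFI92} together with the same sources. Your route --- deducing Theorem \ref{thm:eq-hom-wl} from Theorem \ref{thm:eq-hom-pebble} by proving the equivalence between $G \simeq_k H$ and Duplicator winning $\BP_{k+1}(G,H)$ --- is therefore not literally the paper's argument, but it is exactly the classical Cai--F\"urer--Immerman/Hella correspondence on which the cited results rest, and your sketch follows the standard proof: Duplicator maintains equality of stable $k$-WL colors on all coordinate-deleted $k$-subtuples together with matching atomic type, and Spoiler wins by induction on the first refinement round at which the color multisets differ. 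What your version buys is a self-contained argument in place of a citation; the cost is precisely the bookkeeping you flag, namely compatibility of the stable coloring with coordinate permutations (needed when the substituted $k$-tuples reappear, reordered, as deletions of the new $(k{+}1)$-position) and the degenerate case $k=1$, where the refinement only ranges over neighbors, so the color of the substituted singleton alone does not control adjacency to the remaining pebble and the bijection must be built from the neighbor-color multiset encoded in the stable color of the pebbled vertex.

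One step should be repaired as written. In the converse direction you let Spoiler ``move the pebble at position $j$ to $w$'', but $j$ and $w$ only become known after Duplicator reveals the bijection $f$, whereas in $\BP_{k+1}$ Spoiler must commit to an index before $f$ is played. The correct move --- and the reason the game has $k+1$ pebbles even though WL colors $k$-tuples --- is that Spoiler always places the spare $(k{+}1)$-st pebble on the witness $w$; after that, the pebble sitting on coordinate $j$ of the old $k$-subtuple becomes the new spare when the inductive hypothesis is invoked on the mismatched pair $\bar v[w/j]$, $\bar w[f(w)/j]$. With that correction, and with the permutation-compatibility lemma stated explicitly, your argument goes through and is the standard one.
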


Recall that we write $\sub(F,G)$ to denote the number of subgraphs of $G$ isomorphic to $F$.
We write $\sub(F,\cdot)$ to denote the function that maps each graph $G$ to the corresponding subgraph count $\sub(F,G)$.

\begin{definition}
 Let $F$ be a graph.
 The function $\sub(F,\cdot)$ is \emph{$k$-WL invariant} if
 \begin{equation}
  \sub(F,G) = \sub(F,H)
 \end{equation}
 for all graphs $G,H$ such that $G \simeq_k H$.
\end{definition}

\subsection{Subgraph Counts}

Using the framework from \cite{CurticapeanDM17}, it is possible to describe the subgraph count $\sub(F,G)$ as a linear combination
\[\sub(F,G) = \sum_{i \in [\ell]} \alpha_i \cdot \hom(F_i,G)\]
for certain graphs $F_1,\dots,F_\ell$ and coefficients $\alpha_1,\dots,\alpha_\ell \in \RR$ that only depend on $F$.
More precisely, the graphs $F_1,\dots,F_\ell$ are exactly the homomorphic images of $F$.

\begin{definition}
 Let $F$ and $H$ be two graphs.
 We say that $H$ is a \emph{homomorphic image} of $F$ if there is a surjective homomorphism $\varphi\colon V(F) \to V(H)$
 such that
 \[E(H) = \{\varphi(v)\varphi(w) \mid vw \in E(F)\}.\]
 We write $\spasm(F)$ to denote the set of homomorphic images of $F$.
 The \emph{hereditary tree-width} of $F$, denoted by $\htw(F)$, is the maximum tree-width of a graph in $\spasm(F)$, i.e.,
 \[\htw(F) \coloneqq \max_{H \in \spasm(F)} \tw(H).\]
\end{definition}

In the following, we assume that $\spasm(F)$ contains only one representative from each isomorphism class, i.e., for every homomorphic image $H$ of $F$ there is exactly one graph $H' \in \spasm(F)$ that is isomorphic to $H$.
In particular, the set $\spasm(F)$ is finite.

The backward direction of Theorem \ref{thm:wl-subgraph-count} has already been proved in \cite{ArvindFKV20}.

\begin{lemma}[{\cite[Corollary 4.3]{ArvindFKV20}}]
 \label{la:wl-subgraph-count-backward}
 Let $F$ be a graph such that $\htw(F) \leq k$.
 Then $\sub(F,\cdot)$ is $k$-WL invariant.
\end{lemma}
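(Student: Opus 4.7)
The plan is to combine the Curticapean--Dell--Marx decomposition of subgraph counts into homomorphism counts (stated in the paragraph preceding the definition of $\spasm$; see \cite{CurticapeanDM17}) with the characterization of $k$-WL indistinguishability as homomorphism indistinguishability over $\CT_k$ supplied by Theorem \ref{thm:eq-hom-wl}. Concretely, I would take for granted that for every graph $F$ there exist coefficients $(\alpha_{F'})_{F' \in \spasm(F)} \in \RR$, depending only on $F$, such that
\[
 \sub(F,G) \;=\; \sum_{F' \in \spasm(F)} \alpha_{F'} \cdot \hom(F',G)
\]
holds for every graph $G$. This is the only non-trivial ingredient and I would invoke it as a black box.

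The key observation is then that the hypothesis $\htw(F) \leq k$ means, by definition of hereditary tree-width, that $\tw(F') \leq k$ for every $F' \in \spasm(F)$, and hence every such $F'$ lies in $\CT_k$. Consequently, the right-hand side of the displayed identity is a linear combination of homomorphism counts indexed exclusively by graphs from $\CT_k$.

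Now let $G$ and $H$ be any two graphs with $G \simeq_k H$. By Theorem \ref{thm:eq-hom-wl}, $G \equiv_{\CT_k} H$, which is precisely the statement that $\hom(F',G) = \hom(F',H)$ for every $F' \in \CT_k$, and in particular for every $F' \in \spasm(F)$. Writing the spasm identity once for $G$ and once for $H$ and substituting term by term gives
\[
 \sub(F,G) \;=\; \sum_{F' \in \spasm(F)} \alpha_{F'} \cdot \hom(F',G) \;=\; \sum_{F' \in \spasm(F)} \alpha_{F'} \cdot \hom(F',H) \;=\; \sub(F,H),
\]
which is exactly $k$-WL invariance of $\sub(F,\cdot)$. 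There is no genuine obstacle beyond citing the two ingredients: the spasm decomposition provides the algebraic bridge from subgraph counts to homomorphism counts, the tree-width bound forces each summand to involve only $F' \in \CT_k$, and Theorem \ref{thm:eq-hom-wl} then transports equality across $k$-WL equivalent graphs.
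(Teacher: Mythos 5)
The paper does not prove this lemma itself; it cites it as Corollary 4.3 of \cite{ArvindFKV20} and moves on. Your argument is correct and complete, and it uses precisely the same two ingredients the paper invokes for the converse direction (Lemma \ref{la:wl-subgraph-count-forward}): the Curticapean--Dell--Marx decomposition $\sub(F,\cdot) = \sum_{F' \in \spasm(F)} \alpha_{F'} \hom(F',\cdot)$, and Theorem \ref{thm:eq-hom-wl} equating $G \simeq_k H$ with $G \equiv_{\CT_k} H$. The observation that makes the backward direction go through is the one you isolate: $\htw(F) \leq k$ says exactly that $\spasm(F) \subseteq \CT_k$, so under the hypothesis $G \simeq_k H$ every summand $\hom(F',G) = \hom(F',H)$ agrees, and the fixed linear combination inherits the invariance. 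There is no gap; this is the standard route and matches the framework the paper sets up for the companion lemma.
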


For the other direction, we combine Theorem \ref{thm:tw-homomorphism-distinguishing-closed} and the following lemma from \cite{Seppelt23}.

\begin{lemma}[{\cite[Lemma 4]{Seppelt23}}]
 \label{la:hom-count-to-closure}
 Let $\CF$ be a class of graphs that is homomorphism-distinguishing closed.
 Let $\CL$ be a finite set of pairwise non-isomorphic graphs and $\alpha\colon \CL \rightarrow \RR\setminus \{0\}$.
 Also suppose that for all graphs $G,H$ it holds that
 \begin{equation}
  \label{eq:hom-count-condition}
  G \equiv_\CF H \quad\implies\quad \sum_{L \in \CL} \alpha(L) \cdot \hom(L,G) = \sum_{L \in \CL} \alpha(L) \cdot \hom(L,H).
 \end{equation}
 Then $\CL \subseteq \CF$.
\end{lemma}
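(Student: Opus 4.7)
The plan is to argue by contradiction: assume some $L^\ast \in \CL \setminus \CF$, and use the homomorphism-distinguishing closedness of $\CF$ to obtain graphs $G^\ast, H^\ast$ with $G^\ast \equiv_\CF H^\ast$ and $\hom(L^\ast, G^\ast) \neq \hom(L^\ast, H^\ast)$. The challenge is that applying the hypothesis (\ref{eq:hom-count-condition}) directly to this pair need not produce a contradiction, since the contributions from the other $L \in \CL$ could potentially cancel the discrepancy at $L^\ast$.

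To isolate $L^\ast$, I would exploit the multiplicativity of $\hom$ under the categorical (tensor) product of graphs: $\hom(F, A \times B) = \hom(F, A) \cdot \hom(F, B)$ for every $F$. This has two consequences. First, if $G^\ast \equiv_\CF H^\ast$ then for every graph $K$ we still have $G^\ast \times K \equiv_\CF H^\ast \times K$, because each $F \in \CF$ yields $\hom(F, G^\ast \times K) = \hom(F,G^\ast)\hom(F,K) = \hom(F,H^\ast)\hom(F,K) = \hom(F, H^\ast \times K)$. Second, applying the hypothesis of the lemma to the pair $(G^\ast \times K, H^\ast \times K)$ and factoring out $\hom(L, K)$ gives, for every graph $K$,
\[\sum_{L \in \CL} \alpha(L)\bigl[\hom(L,G^\ast) - \hom(L,H^\ast)\bigr] \cdot \hom(L,K) \;=\; 0.\]

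Now I would appeal to the classical fact that the functions $\hom(L, \cdot)$, indexed by pairwise non-isomorphic graphs $L$, are linearly independent over $\RR$ as functions on the class of graphs. (This is essentially Lovász's theorem; one can also establish it directly via a triangular argument with respect to the homomorphism pre-order on $\CL$, since the largest $L \in \CL$ in that order can be separated from the others by evaluation at $L$ itself.) Setting $\beta(L) := \alpha(L)\bigl[\hom(L,G^\ast)-\hom(L,H^\ast)\bigr]$, linear independence forces $\beta(L) = 0$ for every $L \in \CL$, and since $\alpha(L) \neq 0$ we conclude $\hom(L, G^\ast) = \hom(L, H^\ast)$ for all $L \in \CL$. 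Taking $L = L^\ast$ contradicts the choice of $G^\ast, H^\ast$, so $\CL \subseteq \CF$ as desired.

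The main obstacle is conceptual rather than technical: one has to recognize that the equation (\ref{eq:hom-count-condition}) is too weak when applied to a single pair, and that the right move is to amplify it into an identity parametrised by all graphs $K$ so that linear independence of homomorphism counts can be invoked. Once that reduction is in place, the multiplicativity of $\hom$ over tensor products and the preservation of $\equiv_\CF$ under the operation $(\cdot) \times K$ are routine.
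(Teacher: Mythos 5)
This lemma is not proved in the paper at all; it is imported verbatim from [Seppelt23, Lemma 4]. Your argument is a correct, self-contained reconstruction, and it is in fact essentially the original one: use homomorphism-distinguishing closedness to get $G^\ast \equiv_\CF H^\ast$ with $\hom(L^\ast,G^\ast)\neq\hom(L^\ast,H^\ast)$, amplify via the categorical product (multiplicativity $\hom(F,A\times B)=\hom(F,A)\hom(F,B)$ gives both $G^\ast\times K\equiv_\CF H^\ast\times K$ and the factored identity $\sum_{L\in\CL}\alpha(L)[\hom(L,G^\ast)-\hom(L,H^\ast)]\hom(L,K)=0$ for all $K$), and finish with the classical linear independence of the functions $\hom(L,\cdot)$ for pairwise non-isomorphic $L$. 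One small caution: your parenthetical shortcut for that independence --- ``take the largest $L$ in the homomorphism pre-order and evaluate at $L$ itself'' --- does not work as stated, since graphs below $L$ in that order (e.g.\ $K_2$) also have nonzero homomorphism counts into $L$; the standard proof passes to injective homomorphism counts via M\"obius inversion over vertex partitions and then uses a triangularity argument with respect to (number of vertices, number of edges), or one simply cites the classical result. Since you invoke the classical fact rather than rely on the sketch, the proof stands.
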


\begin{lemma}
 \label{la:wl-subgraph-count-forward}
 Let $F$ be a graph such that $\sub(F,\cdot)$ is $k$-WL invariant.
 Then $\htw(F) \leq k$.
\end{lemma}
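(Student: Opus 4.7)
The plan is to combine the Curticapean-Dell-Marx formula expressing $\sub(F,\cdot)$ as a linear combination of homomorphism counts with the two main tools already imported into this section: the homomorphism-distinguishing closedness of $\CT_k$ (Theorem~\ref{thm:tw-homomorphism-distinguishing-closed}) and the transfer lemma (Lemma~\ref{la:hom-count-to-closure}). The graphs appearing in the linear combination are exactly the elements of $\spasm(F)$, and the crucial point (already flagged in the paper's discussion before the lemma) is that all coefficients are nonzero.

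More concretely, first I would invoke the identity
\[
 \sub(F,G) \;=\; \sum_{H \in \spasm(F)} \alpha_H \cdot \hom(H,G)
\]
from \cite{CurticapeanDM17}, where each $\alpha_H \in \RR \setminus \{0\}$ depends only on $F$ (and not on $G$). Next, I would translate the $k$-WL invariance hypothesis using Theorem~\ref{thm:eq-hom-wl}: if $G \equiv_{\CT_k} H$ then $G \simeq_k H$, so $\sub(F,G) = \sub(F,H)$. Plugging this into the displayed identity yields exactly the hypothesis of Lemma~\ref{la:hom-count-to-closure} for $\CL \coloneqq \spasm(F)$ and $\alpha \coloneqq (\alpha_H)_{H \in \spasm(F)}$, since $\spasm(F)$ is finite and, by our convention, consists of pairwise non-isomorphic graphs.

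Then I would apply Theorem~\ref{thm:tw-homomorphism-distinguishing-closed} to conclude that $\CT_k$ is homomorphism-distinguishing closed, so Lemma~\ref{la:hom-count-to-closure} applies with $\CF \coloneqq \CT_k$. The conclusion is that $\spasm(F) \subseteq \CT_k$, i.e., every homomorphic image of $F$ has tree-width at most $k$. By definition of $\htw$, this is exactly $\htw(F) \leq k$, as required.

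The only genuinely non-routine ingredient is the nonvanishing of the coefficients $\alpha_H$: if even one $\alpha_H$ were zero, the corresponding homomorphic image $H$ would be allowed to have tree-width larger than $k$ without obstruction. Fortunately, this is precisely the content of the spasm expansion in \cite{CurticapeanDM17}, so no further work is needed; the proof is essentially an assembly of the tools already developed.
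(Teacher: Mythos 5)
Your proposal is correct and matches the paper's proof essentially step for step: it invokes the spasm expansion from \cite{CurticapeanDM17} with nonzero coefficients, translates $k$-WL invariance via Theorem~\ref{thm:eq-hom-wl}, and applies Lemma~\ref{la:hom-count-to-closure} with $\CF = \CT_k$ using Theorem~\ref{thm:tw-homomorphism-distinguishing-closed} to conclude $\spasm(F) \subseteq \CT_k$, hence $\htw(F) \leq k$. No gaps.
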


\begin{proof}
 Let $\CF$ denote the class of graphs of tree-width at most $k$.
 By Theorem \ref{thm:tw-homomorphism-distinguishing-closed} the class $\CF$ is homomorphism-distinguishing closed.
 Let $\CL \coloneqq \spasm(F)$.
 By \cite{CurticapeanDM17} there is a unique function $\alpha\colon \CL \rightarrow \RR\setminus \{0\}$ such that
 \[\sub(F,G) = \sum_{L \in \CL} \alpha(L) \cdot \hom(L,G)\]
 for all graphs $G$.
 Since $\sub(F,\cdot)$ is $k$-WL invariant it follows that Equation \eqref{eq:hom-count-condition} is satisfied for all graphs $G,H$ using Theorem \ref{thm:eq-hom-wl}.
 So $\spasm(F) = \CL \subseteq \CF$ by Lemma \ref{la:hom-count-to-closure}.
 This implies that $\htw(F) \leq k$.
\end{proof}

\begin{proof}[Proof of Theorem \ref{thm:wl-subgraph-count}]
 The theorem follows directly from Lemmas \ref{la:wl-subgraph-count-backward} and \ref{la:wl-subgraph-count-forward}.
\end{proof}

\bibliographystyle{plainurl}
\small
\bibliography{literature}

\end{document}